\documentclass[11pt]{article}

\usepackage[T1]{fontenc}
\usepackage[utf8]{inputenc}
\usepackage{lmodern}
\usepackage{geometry}
\usepackage{amsmath,amssymb,amsthm,mathtools}
\usepackage{mathrsfs}
\usepackage{hyperref}
\usepackage{xcolor}
\usepackage{enumitem}
\usepackage{microtype}
\usepackage{stmaryrd}

\usepackage{authblk}

\hypersetup{
  colorlinks=true,
  linkcolor=blue!50!black,
  citecolor=blue!50!black,
  urlcolor=blue!50!black
}

\setlist[enumerate]{itemsep=0.2em, topsep=0.3em}

\newtheorem{definition}{Definition}[section]
\newtheorem{proposition}[definition]{Proposition}
\newtheorem{theorem}[definition]{Theorem}

\newtheorem{remark}[definition]{Remark}
\newtheorem{example}[definition]{Example}

\newcommand{\Sh}{\mathbf{Sh}}
\newcommand{\Set}{\mathbf{Set}}
\newcommand{\Hom}{\mathrm{Hom}}
\newcommand{\id}{\mathrm{id}}

\newcommand{\Ob}{\mathrm{Ob}}

\newcommand{\op}{\mathrm{op}}

\newcommand{\Grp}{\mathbf{Grp}}

\newcommand{\true}{\mathrm{true}}
\newcommand{\E}{\mathcal E}
\newcommand{\F}{\mathcal F}
\newcommand{\G}{\mathcal G}
\newcommand{\Pcal}{\mathcal P}

\title{\bf From Torsors to Topoi: An Introduction with a View Toward $\Sigma$-Protocols in Cryptography}

\author{\Large Takao Inou\'{e}}
\affil{\large Faculty of Informatics, Yamato University, \\ Osaka, Japan\footnote{Email: inoue.takao@yamato-u.ac.jp; \\ Personal Email: takaoapple@gmail.com \\ [I prefer my personal email address for correspondence.]}}
\date{March 17, 2026}

\begin{document}
\maketitle

\begin{abstract}
This paper provides a preparatory introduction to sheaves and topoi,
written as a conceptual continuation of the author's earlier introduction to torsors
and as preparatory background for the author's arXiv paper
\emph{Grothendieck Topologies and Sheaf-Theoretic Foundations of Cryptographic Security:\ Attacker Models and $\Sigma$-Protocols as the First Step}~\cite{InoueSecurity}.
Rather than attempting an encyclopedic survey of all of topos theory,
the exposition develops those parts of the subject that are most relevant
for passing from torsor-based local-to-global reasoning
to sheaf-theoretic and topos-theoretic reasoning:
Grothendieck topologies, sheaves, torsors over a site, descent,
sheaf topoi, elementary topoi, Cartesian closed structure,
subobject classifiers, and internal logic.
The goal is not merely motivational.
We try to develop enough genuine topos theory that the reader can understand,
not only heuristically but structurally,
why the later cryptographic framework of~\cite{InoueSecurity}
uses Grothendieck topologies and sheaf-theoretic language.
To make the note more self-contained,
we also include substantial appendices on basic category theory,
Yoneda's lemma, limits and colimits, equalizers and coequalizers,
Kan extensions, the relation between internal logic and intuitionistic logic,
and exercises with solutions.
In the final part, we explain how these ideas prepare the ground
for a conceptual understanding of $\Sigma$-protocols,
especially in connection with local consistency, simulability,
and the passage from compatible local data to global structure.
\end{abstract}

\bf keywords\rm : topos theory, sheaf theory, Grothendieck topology, torsors, descent, internal logic, 
intuitionistic logic, category theory, Yoneda lemma, $\Sigma$-protocols, cryptographic security, 
smulability, local-to-global principle
\medskip

\bf MSC2020\rm : 18F10, 18F20, 18N10, 03G30, 94A60

\tableofcontents

\section{Introduction}

This paper is intended as the middle term of a three-step line of exposition.
The first step is the author's earlier note on torsors~\cite{InoueTorsors},
where free transitive group actions, transport, local triviality,
and cocycle-based gluing were emphasized.
The third step is the author's later paper~\cite{InoueSecurity},
which proposes a sheaf-theoretic approach to cryptographic security
based on Grothendieck topologies, attacker models, and $\Sigma$-protocols.
The present paper is written to fill the conceptual gap between these two works.
Its purpose is to explain, in a mathematically serious way,
why the route from torsors to topoi is natural,
and why a reader who wishes to understand~\cite{InoueSecurity}
should first understand at least the basic architecture of topos theory.

There are two reasons why a superficial bridge is not enough.
First, torsor theory already contains a local-to-global philosophy:
a torsor is locally indistinguishable from the acting group,
yet globally it may fail to admit any distinguished origin.
This leads naturally to local trivializations,
transition functions, compatibility on overlaps, and descent.
Second, the cryptographic framework of~\cite{InoueSecurity}
does not merely borrow the language of sheaves as a metaphor.
It treats Grothendieck topologies, local observability,
and gluing of information as structural ingredients.
Accordingly, the reader must understand not only what a sheaf is,
but also what a topos is, how internal logic works,
and why such a setting supports disciplined local reasoning.

For this reason, the present note is more substantial than a short conceptual bridge.
It is still introductory in style, but it aims to provide enough genuine topos theory
that the later cryptographic applications can be understood from the inside.
We therefore discuss not only sheaves and torsors over a site,
but also the passage from sheaf categories to topoi,
the elementary definition of a topos,
the role of exponentials and subobject classifiers,
and the basic idea of internal logic.
The discussion is selective rather than encyclopedic,
but it is meant to be mathematically real.

A guiding principle throughout the paper is that
the move from torsors to topoi is not a change of subject.
Rather, it is a conceptual enlargement of structures already present in torsor theory.
The lack of a canonical origin in a torsor,
its recovery from local trivializations,
and the cocycle relations on overlaps
all point toward a setting in which local data and gluing are primary.
Sheaf theory is the first systematic language for such phenomena.
Topos theory then enlarges this language into a full mathematical universe
with its own internal notion of truth, existence, and function.
From that perspective,
understanding topoi is part of understanding what torsors were already trying to say.

The paper is organized as follows.
Section~\ref{sec:sites-sheaves} reviews Grothendieck topologies and sheaves.
Section~\ref{sec:torsors-descent} revisits torsors as sheaf-theoretic and site-theoretic objects,
with emphasis on cocycles and descent.
Section~\ref{sec:sheaf-topoi} explains why sheaf categories lead naturally to topoi.
Section~\ref{sec:elementary-topoi} introduces elementary topoi.
Section~\ref{sec:internal-logic} discusses subobject classifiers and internal logic in greater detail.
Section~\ref{sec:torsors-in-topos} places torsors inside a topos.
Section~\ref{sec:practice} provides a practice section before the cryptographic transition.
Section~\ref{sec:sigma} then explains why this entire progression matters
for the conceptual study of $\Sigma$-protocols and for~\cite{InoueSecurity}.
Section~\ref{sec:conclusion} concludes the paper by summarizing the passage from torsors to topoi
and its relevance to the later cryptographic framework.
The appendices provide substantial supporting material: a brief introduction to category theory,
Yoneda's lemma, Cartesian closed categories, limits and colimits, equalizers and coequalizers,
Kan extensions, the relation between internal logic and intuitionistic logic,
and solutions to the exercises.

\section{Sites, Grothendieck topologies, and sheaves}\label{sec:sites-sheaves}

The first serious step from torsors to topoi is the passage from ordinary open covers
to Grothendieck topologies.
This makes it possible to speak of locality and gluing
not only on topological spaces,
but on much more general categories.

\begin{definition}
Let $\mathcal C$ be a category.
A \emph{sieve} $S$ on an object $U\in\Ob(\mathcal C)$
is a collection of morphisms with codomain $U$
that is closed under precomposition:
if $f:V\to U$ lies in $S$ and $g:W\to V$ is any morphism,
then $f\circ g:W\to U$ also lies in $S$.
\end{definition}

\begin{definition}
A \emph{Grothendieck topology} $J$ on a category $\mathcal C$
assigns to each object $U$ a collection $J(U)$ of sieves on $U$,
called \emph{covering sieves},
such that:
\begin{enumerate}[label=(\roman*)]
\item the maximal sieve belongs to $J(U)$;
\item if $S\in J(U)$ and $f:V\to U$ is any morphism,
then the pullback sieve $f^*S$ belongs to $J(V)$;
\item if $S$ is a sieve on $U$ and there exists $R\in J(U)$
such that for every $f:V\to U$ in $R$, the pullback sieve $f^*S$ belongs to $J(V)$,
then $S\in J(U)$.
\end{enumerate}
The pair $(\mathcal C,J)$ is called a \emph{site}.
\end{definition}

These axioms abstract the familiar behavior of open covers.
Condition (ii) says that coverings pull back.
Condition (iii) is a transitivity condition:
a family that becomes covering after refinement by a cover was already covering.

\begin{example}
Let $X$ be a topological space and let $\mathcal O(X)$ be its category of open sets,
with morphisms given by inclusions.
A family $\{U_i\subseteq U\}$ is covering if $U=\bigcup_i U_i$.
The sieves generated by such families define a Grothendieck topology on $\mathcal O(X)$.
The associated sheaf theory is the usual sheaf theory on $X$.
\end{example}

\begin{definition}
Let $(\mathcal C,J)$ be a site.
A presheaf of sets on $\mathcal C$ is a contravariant functor
$\F:\mathcal C^\op\to\Set$.
It is a \emph{sheaf} if for every object $U$ and every covering sieve $S\in J(U)$,
the natural map
\[
\F(U)\longrightarrow \Hom(S,\F)
\]
is a bijection.
Equivalently, sections over $U$ are uniquely determined by compatible sections
on every cover of $U$.
\end{definition}

In elementary terms,
a sheaf is a presheaf for which local compatible data glue uniquely.
The modern form of the definition matters because it shows that
``locality'' depends only on the chosen Grothendieck topology,
not on any ambient notion of point-set space.
This flexibility is exactly what will later matter in cryptographic applications,
where the ``local'' pieces are not literally open subsets of a topological space,
but information states organized by a coverage relation~\cite{InoueSecurity}.

\begin{proposition}
Let $(\mathcal C,J)$ be a site.
If $\F$ is a sheaf and $\{U_i\to U\}$ is a covering family,
then any family of local sections $s_i\in\F(U_i)$
that agree on all overlaps $U_i\times_U U_j$
glues to a unique global section $s\in\F(U)$.
\end{proposition}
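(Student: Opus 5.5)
The plan is to translate the family $(s_i)$ into a natural transformation $S\to\F$, where $S$ is the sieve generated by the covering family, and then apply the sheaf condition $\F(U)\cong\Hom(S,\F)$. Here I read ``$\{U_i\to U\}$ is a covering family'' as meaning that the generated sieve
\[
S=\{\,g:W\to U \mid g=f_i\circ h \text{ for some } i \text{ and some } h:W\to U_i\,\}
\]
lies in $J(U)$. I also assume, as the statement implicitly does, that the fibre products $U_i\times_U U_j$ exist. ``Agree on overlaps'' means $\F(p_1)(s_i)=\F(p_2)(s_j)$ in $\F(U_i\times_U U_j)$, where $p_1,p_2$ are the two projections.

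\textbf{Construction.} View $S$ as a subfunctor of $\Hom(-,U)$. For $g\in S(W)$, choose a factorization $g=f_i\circ h$ and set
\[
\sigma_W(g)=\F(h)(s_i).
\]
The key step, and the only real obstacle, is that this does not depend on the chosen factorization. Suppose $g=f_i\circ h=f_j\circ k$. Then the pair $(h,k)$ induces a unique map $\langle h,k\rangle:W\to U_i\times_U U_j$ with $p_1\langle h,k\rangle=h$ and $p_2\langle h,k\rangle=k$. By functoriality of $\F$ and the compatibility hypothesis,
\[
\F(h)(s_i)=\F(\langle h,k\rangle)\F(p_1)(s_i)=\F(\langle h,k\rangle)\F(p_2)(s_j)=\F(k)(s_j).
\]
The case $i=j$ with $h\neq k$ is covered by the same computation, since the hypothesis includes $i=j$. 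Naturality of $\sigma$ is then immediate: if $g=f_i h$ and $u:W'\to W$, then $gu=f_i(hu)$, so $\sigma_{W'}(gu)=\F(u)\sigma_W(g)$.

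\textbf{Existence and uniqueness.} Because $S\in J(U)$, the sheaf condition gives a unique $s\in\F(U)$ whose image in $\Hom(S,\F)$ is $\sigma$. That image is the transformation $g\mapsto\F(g)(s)$, by Yoneda. Evaluating at $g=f_i$, whose factorization is $h=\id$, gives $\F(f_i)(s)=s_i$, so $s$ glues the $s_i$.

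For uniqueness, suppose $s'$ also satisfies $\F(f_i)(s')=s_i$ for all $i$. Then for any $g=f_i h$ in $S$,
\[
\F(g)(s')=\F(h)(s_i)=\sigma(g).
\]
So $s'$ maps to the same $\sigma$, and injectivity of $\F(U)\to\Hom(S,\F)$ gives $s'=s$.
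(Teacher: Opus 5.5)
Your proposal is correct and follows the same route as the paper's sketch: pass to the sieve $S$ generated by the family, use agreement on the overlaps to obtain a natural transformation $S\to\F$, and apply the sheaf condition $\F(U)\cong\Hom(S,\F)$. You also supply the details the sketch leaves implicit, and all of them are sound: independence of the chosen factorization via the induced map into $U_i\times_U U_j$ (including the case $i=j$), and the explicit check of uniqueness through injectivity.
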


\begin{proof}[Proof sketch]
The family $\{U_i\to U\}$ generates a covering sieve on $U$.
Compatibility on the pullback overlaps says precisely that the induced map
from the sieve to $\F$ is natural.
By the sheaf property, such a natural transformation comes from a unique section of $\F(U)$.
\end{proof}

Thus sheaf theory is not simply a technical add-on.
It is the formal expression of the principle that local information,
if compatible, should reconstruct global information.
That is the precise direction in which torsor theory already points.

\section{Torsors, cocycles, and descent on a site}\label{sec:torsors-descent}

In the earlier torsor note~\cite{InoueTorsors}, torsors were introduced
through free transitive group actions.
On a site, this idea persists,
but local triviality now replaces the existence of an honest global basepoint.

\begin{definition}
Let $(\mathcal C,J)$ be a site and let $\G$ be a sheaf of groups on it.
A \emph{$\G$-torsor} is a sheaf $\Pcal$ with a right action of $\G$
such that:
\begin{enumerate}[label=(\roman*)]
\item $\Pcal$ is locally nonempty:
there exists a covering family $\{U_i\to U\}$ such that each $\Pcal(U_i)$ is nonempty;
\item the action is locally free and transitive:
for every object $U$ and every $p,q\in \Pcal(U)$,
there exists a unique $g\in\G(U)$ such that $q=p\cdot g$.
\end{enumerate}
\end{definition}

This definition captures the same geometry as in the classical case.
A torsor looks like the acting group once a local section has been chosen,
but no such choice is canonical globally.
The resulting failure of global triviality is measured by cocycles.

\begin{proposition}\label{prop:cocycle-torsor}
Let $\Pcal$ be a $\G$-torsor and let $\{U_i\to U\}$ be a covering family
with chosen local sections $s_i\in\Pcal(U_i)$.
Then on each overlap $U_{ij}:=U_i\times_U U_j$
there exists a unique element
\[
g_{ij}\in\G(U_{ij})
\]
such that
\[
s_j=s_i\cdot g_{ij}
\quad \text{on }U_{ij},
\]
and these elements satisfy the cocycle condition
\[
g_{ij}g_{jk}=g_{ik}
\quad \text{on }U_i\times_U U_j\times_U U_k.
\]
\end{proposition}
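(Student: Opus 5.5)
The plan is to define $g_{ij}$ by restriction and the free transitive action, and then to obtain the cocycle identity from uniqueness on triple overlaps.

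\textbf{Step 1: defining $g_{ij}$.} Let $p_1:U_{ij}\to U_i$ and $p_2:U_{ij}\to U_j$ be the two projections of the fibre product. Restricting along them gives two sections of $\Pcal$ over the same object,
\[
s_i|_{U_{ij}}:=\Pcal(p_1)(s_i)\in\Pcal(U_{ij}),\qquad s_j|_{U_{ij}}:=\Pcal(p_2)(s_j)\in\Pcal(U_{ij}).
\]
Condition (ii) in the definition of a $\G$-torsor, applied at the object $U_{ij}$ with $p=s_i|_{U_{ij}}$ and $q=s_j|_{U_{ij}}$, yields a unique $g_{ij}\in\G(U_{ij})$ with $s_j=s_i\cdot g_{ij}$ on $U_{ij}$. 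This gives existence and uniqueness at once. Local nonemptiness (i) is not needed here, because the sections $s_i$ are already given.

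\textbf{Step 2: naturality of the action.} I would record the fact that the right action $\Pcal\times\G\to\Pcal$ is a morphism of sheaves. Concretely, for any $f:V\to W$ this means $(p\cdot g)|_V=p|_V\cdot g|_V$. This compatibility is implicit in the phrase ``a sheaf with a right action of $\G$,'' and I will state it explicitly as the interpretation being used.

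\textbf{Step 3: the cocycle condition.} Put $U_{ijk}:=U_i\times_U U_j\times_U U_k$. It has projections to $U_{ij}$, $U_{jk}$ and $U_{ik}$, and these are compatible with the further projections to $U_i$, $U_j$ and $U_k$. The key bookkeeping point is that restricting $s_i$ to $U_{ijk}$ through $U_{ij}$ gives the same section as restricting through $U_{ik}$, since both composites equal the projection $U_{ijk}\to U_i$ and $\Pcal$ is a functor. With this in hand, restrict the identities $s_j=s_i g_{ij}$ and $s_k=s_j g_{jk}$ to $U_{ijk}$ and apply Step 2 to get
\[
s_k=(s_i g_{ij})g_{jk}=s_i(g_{ij}g_{jk}).
\]
The last equality uses that this is a right action of a sheaf of groups, so it satisfies the action axiom sectionwise. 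Restricting $s_k=s_i g_{ik}$ to $U_{ijk}$ also gives $s_k=s_i g_{ik}$ there. The uniqueness clause of (ii), now at the object $U_{ijk}$, forces $g_{ij}g_{jk}=g_{ik}$ on $U_{ijk}$.

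\textbf{Main obstacle.} The only real difficulty is the bookkeeping of restriction maps: checking that the various restrictions of $s_i$, $s_j$, $s_k$ and of the $g$'s to $U_{ijk}$ agree. This follows from the commuting projection squares of iterated fibre products together with functoriality of $\Pcal$ and $\G$. I would spell out one case, say $s_j$ restricted via $U_{ij}$ versus via $U_{jk}$, and note that the others are identical.
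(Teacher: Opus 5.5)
Your proposal is correct and follows the paper's own argument. The paper's sketch and its solution to Exercise 2 likewise obtain $g_{ij}$ from the unique-transport condition (ii) at $U_{ij}$, compare $s_k$ with $s_i$ both directly and via $s_j$ on $U_{ijk}$, and conclude $g_{ik}=g_{ij}g_{jk}$ from uniqueness; you simply make explicit the restriction bookkeeping and the compatibility of the action with restriction, which the paper leaves implicit.
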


\begin{proof}[Proof sketch]
Uniqueness and existence of each $g_{ij}$ come from free transitivity.
Evaluating $s_k$ through $s_i$ either directly or via $s_j$
gives the cocycle identity on triple overlaps.
\end{proof}

\begin{remark}
The cocycle of Proposition~\ref{prop:cocycle-torsor}
does not depend on the chosen local trivializations in an absolute sense,
but only up to the expected change-of-trivialization relations.
Thus the torsor is encoded by descent data rather than by a preferred global point.
\end{remark}

This is the first place where descent becomes unavoidable.
A descent datum is, roughly speaking,
local data together with compatibility on overlaps,
subject to higher coherence on multiple overlaps.
Torsors are among the cleanest elementary examples of this principle.

\begin{definition}
Let $\{U_i\to U\}$ be a covering family in a site.
A \emph{descent datum} for presheaves or structured objects over the cover
consists of local objects on each $U_i$
together with isomorphisms on overlaps
satisfying a cocycle condition on triple overlaps.
\end{definition}

\begin{theorem}[Descent principle for torsors, informal form]
A $\G$-torsor over $U$ may be reconstructed from local trivial torsors on a cover of $U$
together with a cocycle satisfying the compatibility condition on triple overlaps.
\end{theorem}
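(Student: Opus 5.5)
We make the informal statement precise: given a covering family $\{U_i\to U\}$ and a family $g_{ij}\in\G(U_{ij})$ satisfying $g_{ij}g_{jk}=g_{ik}$ on triple overlaps, we construct a $\G$-torsor $\Pcal$ over $U$ together with sections $s_i\in\Pcal(U_i)$ such that $s_j=s_i\cdot g_{ij}$ on $U_{ij}$. Conversely, we show that any torsor trivialized on the cover is isomorphic to the torsor built from its own cocycle, as produced by Proposition~\ref{prop:cocycle-torsor}. The construction glues the local trivial torsors $\G|_{U_i}$ (with $\G$ acting by right multiplication) along the left-multiplication isomorphisms $\varphi_{ij}:\G|_{U_{ij}}\to\G|_{U_{ij}}$, $h\mapsto g_{ij}h$. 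The cocycle identity is exactly the statement $\varphi_{ij}\circ\varphi_{jk}=\varphi_{ik}$, i.e.\ that this is a descent datum in the sense of the preceding definition.

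For the construction, I define $\Pcal$ on an object $V\to U$ by
\[
\Pcal(V)=\Bigl\{(h_i)_i \;:\; h_i\in\G(V\times_U U_i),\ h_i=g_{ij}h_j \text{ on } V\times_U U_{ij}\Bigr\},
\]
with restriction maps given componentwise and right action $(h_i)\cdot g=(h_ig)$. Several things then need checking.

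First, $\Pcal$ is a sheaf. Each component $\G(-\times_U U_i)$ is a sheaf, and the compatibility conditions are equalities between sections of a sheaf, so they are preserved under gluing. Hence $\Pcal$ is an equalizer of products of sheaves, and I expect to invoke the gluing proposition of Section~\ref{sec:sites-sheaves} for this.

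Second, the action is free and transitive. If $(h_i)$ and $(h'_i)$ both lie in $\Pcal(V)$, then $h_i^{-1}h'_i$ is independent of $i$ on overlaps, because $h_i^{-1}g_{ij}^{-1}g_{ij}h'_j=h_j^{-1}h'_j$. These elements therefore glue, by the sheaf property of $\G$, to a unique $g\in\G(V)$ with $h'_i=h_ig$.

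Third, $\Pcal$ is locally nonempty. Over $U_k$ the tuple $s_k:=(g_{ik})_i$ lies in $\Pcal(U_k)$: the cocycle identity gives $g_{ik}=g_{ij}g_{jk}$, which is precisely the required compatibility. The same identity yields $s_j=s_i\cdot g_{ij}$. Here I also need the consequences $g_{ii}=1$ and $g_{ji}=g_{ij}^{-1}$, which follow by specializing the cocycle identity to repeated indices.

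For the converse direction, let $\Pcal'$ be a torsor with local sections $s'_i$ whose cocycle is $g_{ij}$. I define a map $\Pcal'\to\Pcal$ by $p\mapsto(h_i)$, where $h_i$ is the unique element with $p=s'_i h_i$ on $V\times_U U_i$. The relation $s'_j=s'_ig_{ij}$ gives the compatibility $h_i=g_{ij}h_j$. This map is $\G$-equivariant, and any equivariant map between torsors is an isomorphism, since it is locally a map $\G\to\G$ of the form $h\mapsto ch$.

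I expect the main obstacle to be the bookkeeping of fibre products on a general site. Because the site need not have a terminal object, every object must be taken over $U$, and one must verify that the pullbacks $V\times_U U_i$ exist and form a cover of $V$; this uses axiom (ii). The convention of acting on the left versus on the right must also be kept consistent, so that the cocycle identity is used in the correct order.
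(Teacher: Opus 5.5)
Your proposal is correct and follows the same route as the paper's sketch: glue the trivial torsors $\G|_{U_i}$ along left multiplication by $g_{ij}$, with the cocycle identity making the gluing consistent. You go further by realizing the glued sheaf explicitly as an equalizer, checking the torsor axioms directly, and proving the converse that a torsor is recovered from its own cocycle, which the paper leaves implicit in the word ``reconstructed.''

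One small correction. On a general site $U_i\to U$ need not be a monomorphism, so the repeated-index specialization only yields $g_{ii}=1$ after pulling back along the diagonal $U_i\to U_i\times_U U_i$; for a single Galois cover, $g_{ii}$ is the Galois cocycle itself. Likewise $g_{ji}=g_{ij}^{-1}$ holds only up to the swap $U_{ij}\cong U_{ji}$. None of your verifications uses more than the triple-overlap identity, so nothing breaks.
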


\begin{proof}[Proof sketch]
One starts with the trivial torsors $\G|_{U_i}$
and uses the cocycle to identify them on overlaps.
The sheaf condition ensures that the glued object exists,
and the cocycle identity guarantees associativity of the gluing.
\end{proof}

In this sense, torsor theory already lives naturally on a site.
The later move to topoi does not discard this perspective;
it gives it a larger ambient universe.

\section{From sheaf categories to topoi}\label{sec:sheaf-topoi}

For a site $(\mathcal C,J)$,
the category $\Sh(\mathcal C,J)$ of sheaves of sets is not only a convenient receptacle for glued data.
It is itself a highly structured mathematical world.
This is the first appearance of the word \emph{topos}.

\begin{definition}
A \emph{Grothendieck topos} is a category equivalent to $\Sh(\mathcal C,J)$
for some site $(\mathcal C,J)$.
\end{definition}

The motivating example is $\Sh(X)$ for a topological space $X$.
But the definition is broader:
a Grothendieck topos is a category of sheaves on some generalized notion of space.
It should therefore be thought of both as
\begin{enumerate}[label=(\alph*)]
\item a category of locally varying sets, and
\item a generalized space encoded by the way those sets vary and glue.
\end{enumerate}

\begin{proposition}
For every site $(\mathcal C,J)$, the category $\Sh(\mathcal C,J)$ has finite limits.
Moreover, the inclusion $\Sh(\mathcal C,J)\hookrightarrow \Set^{\mathcal C^\op}$
preserves finite limits.
\end{proposition}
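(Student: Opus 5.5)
The plan is to compute finite limits pointwise in the presheaf category $\Set^{\mathcal C^\op}$ and then check that a finite limit of sheaves, computed this way, is again a sheaf. Once this is done, both claims follow together. $\Sh(\mathcal C,J)$ is a full subcategory of presheaves, so if the presheaf limit of a finite diagram of sheaves lies in $\Sh(\mathcal C,J)$, it is automatically a limit there. It is also, by construction, preserved by the inclusion.

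First I recall that $\Set^{\mathcal C^\op}$ has all small limits, computed objectwise: for a diagram $D:I\to\Set^{\mathcal C^\op}$ one sets $(\lim D)(U)=\lim_i D_i(U)$, with restriction maps induced componentwise. This is the standard fact, recorded in the appendix on limits and colimits, that limits in functor categories are computed pointwise.

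The key step is to show that if each $D_i$ is a sheaf, then $L:=\lim_i D_i$ is a sheaf. Fix $U$ and a covering sieve $S\in J(U)$, regarded as a subpresheaf of the representable $y(U)=\Hom(-,U)$. We must show that restriction $L(U)\cong\Hom(y(U),L)\to\Hom(S,L)$ is a bijection. By Yoneda and the universal property of the limit there are natural bijections
\[
\Hom(S,L)\cong\lim_i\Hom(S,D_i),\qquad \Hom(y(U),L)\cong\lim_i\Hom(y(U),D_i)\cong\lim_i D_i(U),
\]
since $\Hom(X,-)$ preserves limits. Under these identifications, the restriction map for $L$ is the limit of the restriction maps $D_i(U)\to\Hom(S,D_i)$. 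Each of these is a bijection because $D_i$ is a sheaf, and a limit of isomorphisms of diagrams is an isomorphism, so the restriction map for $L$ is a bijection. The same argument works for arbitrary small limits, which is stronger than what is stated. For finite limits in particular, it covers the terminal presheaf (constant singleton, trivially a sheaf), binary products and equalizers.

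The main obstacle is the naturality bookkeeping in the middle step. One has to verify that the isomorphism $\Hom(S,\lim_i D_i)\cong\lim_i\Hom(S,D_i)$ is compatible with precomposition along the inclusion $S\hookrightarrow y(U)$, so that the square relating the restriction maps commutes. This holds because both maps are induced by precomposition, which commutes with postcomposition by the limit projections. I would state it explicitly rather than grind through elements.

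If a more elementary version is preferred, I would instead check equalizers and finite products by hand, using the covering-family form of gluing from the Proposition in Section~\ref{sec:sites-sheaves}. For an equalizer $E\subseteq\F$ of $f,g:\F\rightrightarrows\G$, compatible local sections $s_i\in E(U_i)$ glue to a unique $s\in\F(U)$. The sections $f(s)$ and $g(s)$ then agree locally, so they agree by uniqueness of gluing in $\G$, which gives $s\in E(U)$. Products are handled componentwise in the same way.
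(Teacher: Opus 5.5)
Your proof is correct and follows the paper's own route: limits of presheaves are computed pointwise, your argument that $\Hom(S,-)$ commutes with limits is exactly the justification behind the paper's unproved claim that the sheaf condition is stable under (finite, indeed all small) limits, and fullness of $\Sh(\mathcal C,J)$ gives both conclusions at once. In the optional elementary variant, run the equalizer check with matching families $S\to E$ on a covering sieve rather than with overlaps $U_i\times_U U_j$, because the paper's sheaf condition is stated for sieves and $\mathcal C$ need not have pullbacks.
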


\begin{proof}[Proof sketch]
Finite limits of presheaves are computed pointwise.
The sheaf condition is stable under finite limits,
so pointwise finite limits of sheaves are again sheaves.
\end{proof}

This already gives a great deal of structure.
Products, equalizers, terminal objects,
and many familiar set-like constructions exist internally.
But a topos has even more:
roughly speaking, it also has function objects and an internal object of truth values.
That is what makes it a genuine replacement for a universe of sets.

\begin{remark}
The passage from a space $X$ to the topos $\Sh(X)$
may be read in two opposite ways.
One may say that $\Sh(X)$ is simply the category of sheaves on $X$.
But one may also say that $\Sh(X)$ \emph{is} the mathematically meaningful avatar of $X$,
since it remembers how objects vary locally over $X$ and how they glue.
This second viewpoint is one of the conceptual sources of modern topos theory.
\end{remark}

\section{Elementary topoi}\label{sec:elementary-topoi}

Grothendieck topoi arise from sites,
but there is also an intrinsic, axiomatic notion of topos.
This is the notion of an \emph{elementary topos}.
It isolates the structural features that make a category resemble $\Set$.

\begin{definition}
An \emph{elementary topos} is a category $\E$
that has
\begin{enumerate}[label=(\roman*)]
\item finite limits,
\item exponentials, and
\item a subobject classifier.
\end{enumerate}
\end{definition}

Let us briefly recall these ingredients.
Finite limits provide terminal objects, products, and equalizers.
Exponentials mean that for objects $A,B\in\E$,
there is an object $B^A$ representing morphisms from $A$ to $B$.
Thus one can speak of an internal function object.
The third ingredient, the subobject classifier,
is one of the most characteristic features of topos theory.

\begin{definition}
A \emph{subobject classifier} in a category $\E$ with finite limits
is an object $\Omega$ together with a morphism
\[
\true:1\to \Omega
\]
from the terminal object,
such that for every monomorphism $m:A\hookrightarrow X$
there exists a unique morphism
\[
\chi_m:X\to \Omega
\]
for which the square
\[
\begin{matrix}
A & \longrightarrow & 1 \\
m\,\downarrow && \downarrow\,\true \\
X & \xrightarrow{\chi_m} & \Omega
\end{matrix}
\]
is a pullback.
\end{definition}

In $\Set$, the subobject classifier is the two-element set $\{0,1\}$.
A subset $A\subseteq X$ is classified by its characteristic function
$\chi_A:X\to\{0,1\}$.
In a general topos, $\Omega$ plays the role of an object of truth values,
but its truth values need not be merely Boolean and global.
This is precisely where local logic enters.

\begin{example}
For a topological space $X$,
the topos $\Sh(X)$ is an elementary topos.
Its subobject classifier $\Omega$ is the sheaf of open subsets:
for each open set $U\subseteq X$,
$\Omega(U)$ is the set of open subsets of $U$.
Thus truth over $U$ is represented not by a single global bit,
but by an open region of $U$ on which a statement holds.
\end{example}

\begin{proposition}
Every Grothendieck topos is an elementary topos.
\end{proposition}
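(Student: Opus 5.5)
Since being an elementary topos is invariant under equivalence of categories, I will work directly with $\E=\Sh(\mathcal C,J)$ and verify the three clauses of the definition in turn. Finite limits are already provided by the earlier proposition: they are computed pointwise in $\Set^{\mathcal C^\op}$, and the inclusion preserves them. It remains to build exponentials and a subobject classifier. In both cases I will first build the structure in the presheaf topos $\widehat{\mathcal C}=\Set^{\mathcal C^\op}$ using the Yoneda lemma, and then show that the sheaf version is a sheaf.

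\textbf{Exponentials.} For presheaves $A,B$ I define $B^A(U):=\Hom_{\widehat{\mathcal C}}(y(U)\times A,B)$, where $y$ is the Yoneda embedding. Yoneda's lemma together with the fact that every presheaf is a colimit of representables gives a natural bijection $\Hom(C,B^A)\cong\Hom(C\times A,B)$. Now let $B$ be a sheaf and $A$ arbitrary. To see that $B^A$ is a sheaf, I rewrite the sheaf condition for a covering sieve $S\hookrightarrow y(U)$ as bijectivity of
\[
\Hom(y(U)\times A,B)\to\Hom(S\times A,B).
\]
I will check this objectwise. For each $V$ and each $a\in A(V)$, the map reduces to a statement about the pulled-back sieve $f^*S$, which is again covering by axiom (ii). Hence bijectivity follows from $B$ being a sheaf. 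Since $\Sh(\mathcal C,J)$ is a full subcategory and products of sheaves are computed as in presheaves, $B^A$ is then the exponential in $\E$.

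\textbf{Subobject classifier.} This is the step I expect to be the main obstacle. I define
\[
\Omega(U):=\{\text{$J$-closed sieves on }U\},
\]
where a sieve $S$ is \emph{closed} if, whenever $f^*S\in J(V)$, we have $f\in S$. Restriction along $g$ is given by pullback $g^*$. The map $\true$ picks out the maximal sieve. The steps are as follows.
\begin{enumerate}[label=(\arabic*)]
\item Check that $\Omega$ is a sheaf. Given compatible closed sieves $S_f$ on the members $f$ of a cover $R$, I glue them to $S=\{h : h^*R\text{-pullbacks land in the } S_f\}$, that is, $h\in S$ iff $(f\circ k)$-pullbacks lie in the appropriate $S_f$ for all composites. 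Showing that $S$ is closed and unique uses the transitivity axiom (iii) essentially.
\item Given a monomorphism of sheaves $m:A\hookrightarrow X$, define
\[
\chi_m(x)=\{f:V\to U : x|_f\in A(V)\}.
\]
This sieve is closed because $A$ is a sheaf: if $x$ lies in $A$ locally, the local pieces glue inside $A$.
\item Verify that the square is a pullback and that $\chi_m$ is unique. A map $\phi:X\to\Omega$ whose pullback along $\true$ is $A$ must satisfy $f\in\phi(x)$ iff $\phi(x|_f)$ is maximal iff $x|_f\in A$. Hence $\phi=\chi_m$.
\end{enumerate}

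The delicate bookkeeping lies in showing that closedness is preserved under pullback and under gluing. I would handle this by first proving a small lemma: the closure operator $\bar S=\{f: f^*S\in J\}$ is idempotent and commutes with pullback, and both facts follow from axioms (ii) and (iii). Combining the three clauses then shows that $\Sh(\mathcal C,J)$, and hence every Grothendieck topos, is an elementary topos.
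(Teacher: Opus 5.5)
Your argument is correct. It agrees with the paper on finite limits but takes a different route for the other two clauses.

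\textbf{The paper's route.} The paper gets both clauses from sheafification. Exponentials exist ``because sheafification preserves the needed structure''; precisely, the associated sheaf functor $\mathbf a$ preserves finite products, which makes sheaves an exponential ideal in $\Set^{\mathcal C^\op}$. The object $\Omega$ is obtained ``by sheafifying the presheaf of locally covering sieves''.

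\textbf{Your route.} You never use $\mathbf a$. For exponentials, you show directly that the presheaf exponential $B^A$ is already a sheaf. The key observation is that $S\times A\hookrightarrow y(U)\times A$ pulls back along each element $(f,a)$ to the covering sieve $f^*S$, so extensions along it are produced by gluing in $B$. For the subobject classifier, you take $\Omega_J$ to be the presheaf of $J$-closed sieves and check the sheaf and classifying properties by hand from axioms (i)--(iii).

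\textbf{What each buys.} Your route is self-contained and explicit. It avoids constructing $\mathbf a$ and proving it preserves finite products, which is a substantial task in its own right. Your description of $\Omega$ also immediately yields the paper's Exercise 1: for the open-cover topology, the closed sieves on $U$ are exactly the sieves $\{V : V\subseteq W\}$ with $W\subseteq U$ open. Your $\chi_m$ then recovers local truth values such as $\{x\in U\mid f(x)>0\}$.

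Your construction is also the precise form of the paper's $\Omega$-step. $\Omega_J$ is a retract of the presheaf classifier via $S\mapsto\bar S$ and is already a sheaf. By contrast, literally sheafifying the presheaf of all sieves does not give $\Omega_J$ in general. Take the category $c\to d$ with the empty sieve covering $c$ and only the maximal sieve covering $d$. Sheaves are the presheaves with $F(c)=1$, so $\mathbf a$ applied to the presheaf of all sieves still has three elements at $d$, while $\Omega_J(d)$ has two.

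The paper's route, once $\mathbf a$ is available, gives a two-line exponential argument. That argument works verbatim for any reflective subcategory of a cartesian closed category whose reflector preserves finite products.

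\textbf{Two small points to tighten.} In step (1), the glued sieve is most cleanly described by: $h\in S$ iff $S_{hk}$ is maximal for every $k$ with $hk\in R$. Also, both of your constructions tacitly need $\mathcal C$ small, so that $\Omega(U)$ and $\Hom(y(U)\times A,B)$ are sets.
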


\begin{proof}[Proof sketch]
For $\Sh(\mathcal C,J)$,
finite limits are inherited from the presheaf category,
exponentials exist because sheafification preserves the needed structure,
and a subobject classifier can be constructed by sheafifying the presheaf of locally covering sieves.
Full proofs may be found in standard references~\cite{MacLaneMoerdijk,JohnstoneSketches}.
\end{proof}

Thus the passage from sites to sheaves has brought us
not merely to a category of glued objects,
but to a set-like universe with its own internal structure.

\section{Subobject classifiers and internal logic}\label{sec:internal-logic}

One of the deepest insights of topos theory is that
a topos carries its own internal logic.
This logic is not imposed from outside;
it arises from the category's structural features,
especially the subobject classifier.

In ordinary set theory,
a predicate on a set $X$ is represented by a subset of $X$,
or equivalently by a characteristic function $X\to\{0,1\}$.
In a topos,
a predicate on an object $X$ is represented by a subobject of $X$,
and hence by a classifying arrow $X\to\Omega$.
Thus $\Omega$ serves as an object of truth values.
The difference is that these truth values may vary locally.

\begin{remark}
In $\Sh(X)$, the truth value of a statement over an open set $U$
is generally not simply ``true'' or ``false''.
It is an open subset of $U$ on which the statement holds.
Hence truth is local and geometric.
This is one of the central reasons topoi are so well adapted
to local-to-global reasoning.
\end{remark}

The slogan that a topos has an internal logic means at least three things.
First, subobjects behave like predicates.
Second, $\Omega$ behaves like an internal algebra of truth values.
Third, quantifiers can be interpreted by categorical adjunctions,
so that statements about existence and universality can be expressed internally.
Even when one does not formalize everything,
these three points already give a serious mathematical content to the phrase
``reasoning inside a topos.''

\begin{example}[Truth values in a sheaf topos]
Let $X$ be a topological space and let $\Omega\in\Sh(X)$ be the subobject classifier.
For an open set $U\subseteq X$, the set $\Omega(U)$ is the set of open subsets of $U$.
If $A\hookrightarrow F$ is a subobject of a sheaf $F$,
then the classifying map $\chi_A:F\to\Omega$
assigns to a section $s\in F(U)$
the largest open subset of $U$ on which $s$ belongs to $A$.
Thus predicates do not return a global bit;
they return a region of validity.
\end{example}

\begin{example}[A concrete local predicate]
Take the sheaf $C^0(-,\mathbb R)$ of continuous real-valued functions on a space $X$.
Let $P\hookrightarrow C^0(-,\mathbb R)$ be the subobject consisting of strictly positive functions.
Then for $f\in C^0(U,\mathbb R)$,
the truth value $\chi_P(f)\in\Omega(U)$
is the open subset
\[
\{x\in U\mid f(x)>0\}.
\]
So the internal statement ``$f>0$'' is not simply true or false on $U$;
it is true precisely on the open region where positivity holds.
This is an elementary but very instructive model of local truth.
\end{example}

One may push this much further.
In an elementary topos,
one can interpret conjunction, implication,
quantification, equality, and existence.
This yields a higher-order intuitionistic logic internal to the topos.
For the present paper, the full formal apparatus is unnecessary,
but the following heuristic is indispensable:
\begin{quote}
Statements in a topos are evaluated relative to the local context,
and existential claims amount to the existence of compatible local data.
\end{quote}

This becomes especially transparent in Kripke--Joyal semantics.
In a sheaf topos,
to say that an open set $U$ forces an existential statement
means that $U$ can be covered by smaller opens on which witnesses exist,
with the expected compatibility conditions.
Thus the logical meaning of existence is already sheaf-theoretic.

\begin{proposition}[Kripke--Joyal heuristic for existence]
Let $\Sh(X)$ be a sheaf topos and let $\varphi(x)$ be an internal statement.
Then the assertion that ``$U\Vdash \exists x\,\varphi(x)$''
should be read as follows:
there exists an open cover $U=\bigcup_i U_i$
and sections $x_i$ over $U_i$
such that each $U_i$ forces $\varphi(x_i)$,
with compatibility on overlaps whenever required.
\end{proposition}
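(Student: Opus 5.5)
The plan is to reduce the statement to the standard Kripke--Joyal clause for $\exists$, and derive that clause from the description of $\exists$ as image factorization in $\Sh(X)$. First I fix the meaning of forcing: for a sheaf $F$ on $X$, a subobject $A\hookrightarrow F$ interpreting $\varphi(x)$, and a section $a\in F(U)$, we say $U\Vdash\varphi(a)$ iff $a$, viewed as a map $y(U)\to F$, factors through $A$. By the earlier Example on truth values in a sheaf topos, this holds iff $\chi_A(a)=U$ in $\Omega(U)$. The internal formula $\exists x\,\varphi(x)$ is then interpreted as the image of the composite $A\hookrightarrow F\to 1$. So $U\Vdash\exists x\,\varphi(x)$ means that the unique map $y(U)\to 1$ factors through this image subobject $I\hookrightarrow 1$.

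The key step is to compute images in $\Sh(X)$. In presheaves, the image of $A\to 1$ is computed pointwise: it is the subpresheaf $I_0$ with $I_0(V)$ nonempty exactly when $A(V)\neq\emptyset$. In sheaves, the image is the sheafification of $I_0$, which is the closure of $I_0$ under covering families. Equivalently, I will show that $I(V)$ is nonempty iff $V$ admits an open cover $V=\bigcup_i V_i$ with $A(V_i)\neq\emptyset$ for each $i$. To verify this, I will check three things:
\begin{enumerate}[label=(\alph*)]
\item the subpresheaf $I$ so defined is a sheaf, using the transitivity axiom (iii) of a Grothendieck topology;
\item $A\to 1$ factors through $I$;
\item $I$ is the least such subsheaf, because any subsheaf of $1$ containing the image of $A$ must contain every $V$ covered by opens $V_i$ with $A(V_i)\neq\emptyset$, by the gluing property of the Proposition in Section~\ref{sec:sites-sheaves}.
\end{enumerate}
Unwinding with $V=U$ gives the forcing clause: there are a cover $U=\bigcup U_i$ and sections $x_i\in F(U_i)$ with $x_i\in A(U_i)$, that is, $U_i\Vdash\varphi(x_i)$.

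Finally I address the phrase ``compatibility on overlaps whenever required.'' The witnesses $x_i$ need \emph{not} agree on $U_i\cap U_j$. If they did, they would glue to a global witness $x\in F(U)$ and we would have $U\Vdash\varphi(x)$ outright. I will therefore read the proposition so that compatibility is only demanded when one additionally asserts unique existence, and I will check that case separately. If $\exists!x\,\varphi(x)$ is forced, then on $U_i\cap U_j$ both $x_i$ and $x_j$ satisfy $\varphi$, so uniqueness forces them to agree there, and gluing produces a global witness. Monotonicity and the local character of forcing (if $U\Vdash\psi$ and $V\subseteq U$ then $V\Vdash\psi$, and conversely $\psi$ is forced on $U$ if it is forced on each member of a cover) follow directly from $I$ and $A$ being sheaves.

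I expect the main obstacle to be the image computation: showing that the sheafified image is exactly the ``locally inhabited'' subsheaf. The point to handle carefully is that sheafifying a subobject of $1$ needs only one application of the plus construction, since a subpresheaf of a sheaf is already separated. I would first try to avoid general sheafification altogether by verifying (a)--(c) directly. Since the statement is phrased as a heuristic (``should be read as''), I would present the argument as making this reading precise rather than proving it as a new theorem, and cite~\cite{MacLaneMoerdijk} for the general Kripke--Joyal semantics.
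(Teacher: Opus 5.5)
Your argument is correct. It also supplies something the paper does not: the paper gives no proof of this proposition and presents it as a heuristic ``reading,'' supported only by the preceding sentence's appeal to Kripke--Joyal semantics (implicitly deferring to \cite{MacLaneMoerdijk}). Where the paper asserts the forcing clause, you derive it by interpreting $\exists$ as the image of $A\hookrightarrow F\to 1$. Your checks (a)--(c) are exactly what is needed, since in a topos the image is the least subobject through which the map factors. In (a), also record that $I$ is closed under restriction; this uses stability of covers, axiom (ii), not only transitivity. Your route explains \emph{why} existence becomes local: images in $\Sh(X)$ are not computed pointwise, and the closure under covers introduced by sheafification is precisely the ``pass to a cover'' in the clause. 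Your handling of ``compatibility on overlaps whenever required'' is a genuine correction, not a stylistic choice. If compatibility were demanded in general, the witnesses would glue in the sheaf $A$ to some $x\in A(U)$, and the clause would collapse to $U\Vdash\varphi(x)$. That would contradict the paper's own later Remark that existential truth may only be established after passing to a cover, so the paper's phrasing here (and its earlier ``with the expected compatibility conditions'') is misleading. Two small refinements are worth making. In the $\exists!$ case, it is cleaner to observe directly that forcing uniqueness at $U$ means $A(V)$ has at most one element for every open $V\subseteq U$, rather than invoking forcing clauses for $\forall$, $\Rightarrow$ and $=$ that you have not derived. You also treat only the parameter-free case, which matches the statement as written; the version with a parameter $b\in G(U)$, needed once quantifiers are nested, follows from the same image computation for $A\hookrightarrow F\times G\to G$.
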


\begin{proposition}[Kripke--Joyal heuristic for universal statements]
Let $\Sh(X)$ be a sheaf topos and let $\varphi(x)$ be an internal statement about sections of a sheaf $F$.
Then ``$U\Vdash \forall x\,\varphi(x)$'' means that
for every open subset $V\subseteq U$ and every section $x\in F(V)$,
one has $V\Vdash \varphi(x)$.
Thus universal truth is stable under restriction to smaller opens.
\end{proposition}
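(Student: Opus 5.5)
We will derive the clause for $\forall$ from the way universal quantification is interpreted categorically, namely as the right adjoint $\forall_\pi$ to pullback along the projection $\pi: F\times Y\to Y$. Here $Y$ is the object of the remaining free variables; for simplicity take $Y=1$, or absorb parameters into the context. Forcing will be read through the Yoneda embedding. To say $U\Vdash\psi$ for a closed formula $\psi$ with classifying map $1\to\Omega$ means that the restriction to $U$ factors through $\true$. Equivalently, the truth value $\llbracket\psi\rrbracket(U)$ in $\Omega(U)$, which is an open subset of $U$ by the earlier example on truth values in $\Sh(X)$, equals all of $U$. For a formula $\varphi(x)$ and a section $x\in F(V)$, $V\Vdash\varphi(x)$ means that $x$, viewed as a map $y(V)\to F$, factors through the subobject $\llbracket\varphi\rrbracket\hookrightarrow F$.

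The key step is to unwind the adjunction. A map $y(U)\to 1$ factors through the subobject $\llbracket\forall x\,\varphi\rrbracket=\forall_\pi\llbracket\varphi\rrbracket\hookrightarrow 1$ if and only if, by the adjunction $\pi^*\dashv\forall_\pi$ on subobjects, $\pi^*y(U)=F\times y(U)$ factors through $\llbracket\varphi\rrbracket\times y(U)$ as subobjects of $F\times y(U)$. We then test this inclusion against representables. Every sheaf on $X$ is covered by representables, and subobjects of a sheaf are determined by which sections they contain, so it suffices to check generalized elements $y(V)\to F\times y(U)$. Such a map is exactly a pair consisting of an inclusion $V\subseteq U$ and a section $x\in F(V)$, and it factors through $\llbracket\varphi\rrbracket\times y(U)$ precisely when $x$ lies in $\llbracket\varphi\rrbracket(V)$, that is, when $V\Vdash\varphi(x)$. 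This gives the stated clause. As a cross-check we will identify the open subset computed directly: $\llbracket\forall x\,\varphi\rrbracket(U)$ is the largest open $W\subseteq U$ such that every $x\in F(V)$ with $V\subseteq W$ satisfies $\varphi$ on all of $V$.

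Stability under restriction then follows immediately. If $U\Vdash\forall x\,\varphi(x)$ and $U'\subseteq U$, then every pair $(V,x)$ with $V\subseteq U'$ is also a pair with $V\subseteq U$, so the condition holds over $U'$. The clause is therefore monotone, which is the general monotonicity of forcing.

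The main obstacle is to justify that $\forall_\pi$ exists and is computed this way in $\Sh(X)$ rather than only in presheaves. We will handle this by observing two facts. First, the presheaf formula
\[
(\forall_\pi A)(U)=\{\ast \mid \forall V\subseteq U,\ \forall x\in F(V):\ x\in A(V)\}
\]
already defines a sheaf when $A$ and $F$ are sheaves, because the condition is local: it can be checked on a cover, using that $A$ is closed in $F$, as classified subobjects of sheaves are. Second, since the inclusion of sheaves into presheaves is full and preserves finite limits (the earlier proposition on finite limits), the adjunction restricts to sheaves. Because the statement is labeled a heuristic, at this level of rigor the plan is adequate, and we will point to~\cite{MacLaneMoerdijk} for the full Kripke--Joyal theorem.
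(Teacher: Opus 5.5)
Your proposal is correct. It necessarily differs from the paper, because the paper gives no proof of this proposition. It presents the clause as the \emph{meaning} of $U\Vdash\forall x\,\varphi(x)$ (restated in the solution to Exercise~3) and implicitly defers to the Kripke--Joyal theorem in \cite{MacLaneMoerdijk}, so stability under restriction is simply read off from the wording.

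You instead derive the clause from the categorical semantics that the paper only sketches elsewhere. You interpret $\forall x\,\varphi$ as $\forall_\pi\llbracket\varphi\rrbracket$, unwind $\pi^*\dashv\forall_\pi$, and test $F\times y(U)\le\llbracket\varphi\rrbracket$ on sections. Applying the adjunction directly in $\mathrm{Sub}(1)$ is legitimate because $y(U)$ is subterminal in $\Sh(X)$, so every map out of it is monic. The same observation handles parameters $y(U)\to Y$ as well; on a general site this step would need an extra argument, e.g.\ stability of $\forall$ under pullback.

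Your route buys an actual explanation of the asymmetry that the paper only asserts in the subsequent remark. The presheaf formula for $\forall_\pi A$ is already a sheaf when $A$ is a closed subobject, so no sheafification, and hence no cover, enters the universal clause. By contrast, the presheaf image computing $\exists_\pi$ must be sheafified, and that closure is exactly where the covers in the existential clause come from. The paper's approach buys brevity at its expository level, at the cost of leaving the clause unconnected to $\Omega$ and to the adjoint interpretation of quantifiers it describes.
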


\begin{remark}
This difference between existential and universal statements is crucial.
Existential truth may have to be established only after passing to a cover,
whereas universal truth must survive every further localization.
That asymmetry is one of the signatures of intuitionistic and geometric reasoning.
\end{remark}

\begin{example}[Local existence versus global existence]
Let $X=S^1$ and let $F$ be the sheaf of continuous real-valued functions.
Consider the statement
\[
\exists g\in F\; (g^2=f)
\]
for a fixed section $f\in F(X)$.
If $f$ is everywhere positive, then this statement is globally true.
If $f$ changes sign, it is globally false.
But even when no global square root exists,
one may still have local square roots on a suitable open cover.
In internal language,
this means that the existential statement can be forced locally without being forced globally.
\end{example}

\begin{example}[Implication as local stability]
For subobjects $A,B\hookrightarrow F$ in a topos,
the implication $A\Rightarrow B$ is another subobject of $F$,
characterized internally by the property that a section belongs to it
exactly where membership in $A$ locally forces membership in $B$.
In a sheaf topos, implication is therefore not a purely pointwise operation;
it records local stability of entailment.
\end{example}

\begin{remark}
The truth values in an elementary topos form internally a Heyting algebra rather than, in general, a Boolean algebra.
For this reason, excluded middle need not hold.
The logical meaning of this fact is discussed in Appendix~\ref{app:intuitionistic}.
For now, the important point is that local truth is usually subtler than classical binary truth.
\end{remark}

\begin{proposition}[Internal logic as organized local reasoning]
Let $\E$ be a Grothendieck topos.
Then the interpretation of predicates by subobjects,
of truth values by $\Omega$,
and of quantifiers by the internal adjoint structure of the topos
organizes reasoning in such a way that local compatibility data can be expressed and manipulated internally.
\end{proposition}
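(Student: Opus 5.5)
The statement is informal, so I will first make it precise enough to prove and then establish each clause. I would prove three concrete facts about a Grothendieck topos $\E\simeq\Sh(\mathcal C,J)$, where the meaning of ``local'' is fixed by the site.

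\emph{First}, for each object $X$, the poset $\mathrm{Sub}(X)$ of subobjects is naturally isomorphic to $\Hom(X,\Omega)$. This follows directly from the definition of the subobject classifier together with the earlier proposition that every Grothendieck topos is elementary.

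\emph{Second}, each $\mathrm{Sub}(X)$ is a Heyting algebra, and every pullback map $f^*:\mathrm{Sub}(Y)\to\mathrm{Sub}(X)$ preserves this structure and has adjoints $\exists_f\dashv f^*\dashv\forall_f$. These adjoints interpret the quantifiers, and the Beck--Chevalley condition makes them stable under substitution.

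\emph{Third}, the key ``local compatibility'' clause: the image $\exists_f$ is computed by sheafifying the presheaf image. Consequently a generalized element $a:U\to Y$ factors through $\exists_f A$ if and only if there is a covering sieve $S\in J(U)$ such that each $g:V\to U$ in $S$ admits a lift $V\to A$ over $a\circ g$. This is exactly the Kripke--Joyal existence clause stated heuristically above.

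In order, I would proceed as follows. (1) Record the bijection $\mathrm{Sub}(X)\cong\Hom(X,\Omega)$. (2) Build the lattice operations. Meets are pullbacks, which exist by the proposition on finite limits. Joins are images of coproducts. Implication $A\Rightarrow B$ is obtained from the exponential, as the pullback of the internal implication $\Omega\times\Omega\to\Omega$ classifying the order relation $\le\ \hookrightarrow\Omega\times\Omega$, which is itself the equalizer of $\wedge$ and the first projection. (3) Construct $\exists_f$ as the image of $A\hookrightarrow X\xrightarrow{f}Y$, using the epi--mono factorization. Construct $\forall_f$ as the right adjoint to $f^*$, obtained via exponentials as the pullback of $\Omega^f$ along the classifying maps. (4) Prove that a morphism of sheaves is an epimorphism if and only if it is locally surjective. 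From this, deduce the sieve description of images in (3). (5) Conclude the universal clause: $a$ factors through $\forall_f A$ if and only if every restriction along every $g:V\to U$ lands in $A$. This gives the stability-under-restriction statement of the universal Kripke--Joyal proposition.

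The main obstacle is step (4), the characterization of epimorphisms in $\Sh(\mathcal C,J)$ as locally surjective maps, because everything ``local'' in the statement hinges on it. I would prove it in two directions. If $\phi:F\to G$ is locally surjective, then two maps out of $G$ that agree after composing with $\phi$ agree on a covering sieve at every section, hence agree by the sheaf condition. Conversely, form the sheaf image, that is, the subsheaf of sections of $G$ that are locally in the image of $\phi$; I would check that this is a sheaf using transitivity axiom (iii). If $\phi$ is epi, then comparing the two classifying maps $G\to\Omega$ of this image and of $G$ itself forces the image to be all of $G$.

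The remaining verifications are the Heyting laws and Beck--Chevalley. These are routine once the adjunctions are in place, and I would cite \cite{MacLaneMoerdijk} for them rather than reprove them.
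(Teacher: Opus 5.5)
Your proposal is correct and follows the same outline as the paper's sketch: predicates as subobjects classified by maps into $\Omega$, connectives from the finite-limit and exponential structure, quantifiers as the adjoints $\exists_f\dashv f^*\dashv\forall_f$, and locality via the Kripke--Joyal clauses. The one real difference is that the paper simply invokes Kripke--Joyal semantics, whereas you derive the existential and universal clauses from the characterization of epimorphisms in $\Sh(\mathcal C,J)$ as locally surjective maps, which pins down what ``local'' means in the statement. (Your one-line recipe for $\forall_f$ via $\Omega^f$ is imprecise as written, but the standard existence of $\forall_f$, e.g.\ as $\Pi_f$ restricted to subobjects, is all the argument needs.)
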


\begin{proof}[Proof sketch]
Predicates are represented by monomorphisms and therefore by classifying arrows into $\Omega$.
Products and exponentials support conjunction and function spaces.
The existence of appropriate adjoints for pullback along projections supplies the categorical content of quantification.
In a Grothendieck topos, Kripke--Joyal semantics makes these constructions explicitly local.
\end{proof}

\begin{remark}
This does not mean that all reasoning in a topos is merely local patchwork.
Rather, it means that global truth is mediated by local truth and gluing.
The point is not the denial of global structure,
but the disciplined reconstruction of global structure from local data.
\end{remark}

This perspective is crucial for the intended cryptographic application.
The later paper~\cite{InoueSecurity}
uses Grothendieck topologies and sheaf-like organization
precisely because attacker knowledge, observability,
and compatibility of information are not simply global yes/no matters.
The internal logic of a topos is therefore not an ornamental abstraction;
it is a model for local reasoning with controlled gluing.

\section{Practice: working with sheaves, topoi, and internal logic}\label{sec:practice}

The following exercises are intended to help the reader consolidate the main ideas of the paper before passing to the explicitly cryptographic discussion.
They are not logically necessary for the later sections,
but they are pedagogically useful.
Solutions are collected in Appendix~\ref{app:solutions}.

\begin{enumerate}[label=\textbf{Exercise \arabic*.}, leftmargin=*]
\item Let $X$ be a topological space and let $\Omega\in\Sh(X)$ be the subobject classifier.
Show that for each open set $U\subseteq X$, the set $\Omega(U)$ may be identified with the set of open subsets of $U$.
Explain why this means that truth values in $\Sh(X)$ are local.

\item Let $\G$ be a sheaf of groups on a site $(\mathcal C,J)$ and let $\Pcal$ be a $\G$-torsor.
Choose local sections on a covering family and derive the corresponding cocycle $g_{ij}$.
Verify formally that the cocycle condition on triple overlaps expresses compatibility of transport.

\item In a sheaf topos, explain in words the difference between the meanings of
\[
U\Vdash \exists x\,\varphi(x)
\quad\text{and}\quad
U\Vdash \forall x\,\varphi(x).
\]
Why is the first statement allowed to pass to a cover while the second must hold after every restriction?

\item Let $F=C^0(-,\mathbb R)$ on a topological space $X$ and let $P\hookrightarrow F$ be the subobject of positive functions.
For a section $f\in F(U)$, describe concretely the classifying map $\chi_P(f)\in \Omega(U)$.
What does this say about the internal meaning of the predicate ``$f>0$''?

\item Let $\E$ be an elementary topos.
Explain why the data of finite limits, exponentials, and a subobject classifier make $\E$ resemble a universe of sets.
Which of these ingredients is most directly responsible for internal truth values?
\end{enumerate}

\section{Torsors and local symmetry inside a topos}\label{sec:torsors-in-topos}

We now return to torsors,
but from the more mature viewpoint provided by topos theory.
Once one is working in a topos $\E$,
it makes sense to speak of internal groups, internal actions,
and therefore internal torsors.

\begin{definition}
An \emph{internal group} in a category with finite products
is an object $G$ equipped with multiplication, unit, and inversion morphisms
satisfying the usual group diagrams internally.
\end{definition}

\begin{definition}
Let $G$ be an internal group in a topos $\E$.
A \emph{$G$-torsor in $\E$} is an object $P$ with a right $G$-action
such that internally:
\begin{enumerate}[label=(\roman*)]
\item $P$ is inhabited locally,
\item the action is free,
\item the action is transitive.
\end{enumerate}
Equivalently, the canonical map
\[
P\times G\longrightarrow P\times P,
\qquad (p,g)\mapsto (p,p\cdot g),
\]
is an isomorphism, and $P\to 1$ is an epimorphism.
\end{definition}

This definition shows clearly how torsors belong to the topos world.
The absence of a global origin is not a defect.
It is a legitimate internal form of symmetry.
What matters is not the existence of a preferred point,
but the existence of local points and the ability to transport among them.

\begin{proposition}
Let $(\mathcal C,J)$ be a site and let $\G$ be a sheaf of groups.
Then $\G$-torsors on the site are precisely torsors for the internal group $\G$
in the topos $\Sh(\mathcal C,J)$.
\end{proposition}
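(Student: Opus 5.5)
The plan is to compare the two definitions condition by condition, using the facts that limits in $\Sh(\mathcal C,J)$ are computed pointwise (the proposition on finite limits in Section~\ref{sec:sheaf-topoi}) and that epimorphisms of sheaves are exactly the \emph{locally surjective} maps. First I would note that an internal group in $\Sh(\mathcal C,J)$ is the same thing as a sheaf of groups. The reason is that products of sheaves are computed objectwise, so morphisms $\G\times\G\to\G$, $1\to\G$, $\G\to\G$ satisfying the group diagrams are the same as group structures on each $\G(U)$ compatible with restriction. By the same argument, an internal right action $\Pcal\times\G\to\Pcal$ is the same as a compatible family of right actions $\Pcal(U)\times\G(U)\to\Pcal(U)$. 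So the underlying data on the two sides agree, and only the torsor axioms need to be compared.

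Second, I would treat the free-and-transitive condition. The canonical map $\Pcal\times\G\to\Pcal\times\Pcal$ is a morphism of sheaves. A morphism of sheaves is an isomorphism if and only if it is bijective on sections over every object $U$, since isomorphisms in a full subcategory of presheaves are detected objectwise. On sections over $U$ the map reads $(p,g)\mapsto(p,p\cdot g)$ on $\Pcal(U)\times\G(U)$. Its bijectivity says precisely that for all $p,q\in\Pcal(U)$ there is a unique $g\in\G(U)$ with $q=p\cdot g$. This is condition (ii) of the site-theoretic definition in Section~\ref{sec:torsors-descent}.

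Third, and this is the step needing the only real input, I would identify "$\Pcal\to1$ is an epimorphism in $\Sh(\mathcal C,J)$" with local nonemptiness. I would prove the standard lemma that a map of sheaves $\alpha:\F\to\mathcal H$ is epi if and only if, for every section $t\in\mathcal H(U)$, the sieve of $f:V\to U$ such that $f^*t$ lifts to $\F(V)$ is covering.
\begin{itemize}
\item[] \emph{(Local surjectivity implies epi.)} Suppose $\beta\alpha=\gamma\alpha$. Then $\beta$ and $\gamma$ agree on $t$ locally, so they agree on $t$ by the separatedness part of the sheaf condition.
\item[] \emph{(Epi implies local surjectivity.)} Form the image presheaf of $\alpha$ and sheafify it to get a subsheaf $\mathrm{Im}\,\alpha\hookrightarrow\mathcal H$. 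Compare the two maps $\mathcal H\rightrightarrows\Omega$ given by $\true$ and the classifying map of $\mathrm{Im}\,\alpha$, using the subobject classifier of Section~\ref{sec:elementary-topoi}. Both agree after composing with $\alpha$, so epi forces them to be equal, hence $\mathrm{Im}\,\alpha=\mathcal H$. Unwinding the sheafified image gives exactly the covering-sieve condition.
\end{itemize}
Applied to $\mathcal H=1$, where each $1(U)$ is a singleton, the lemma says that for every $U$ the sieve of $V\to U$ with $\Pcal(V)\neq\emptyset$ is covering. That is condition (i), read for every object $U$, which is the intended meaning of the definition.

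The main obstacle is this third step, specifically the direction "epi implies locally surjective". It requires either the sheafification/image construction or the description of $\Omega$ as closed sieves. I would cite \cite{MacLaneMoerdijk} for the fact that the sheafified image is a sheaf and that monos into $\mathcal H$ are classified by $\Omega$, rather than rederive them. A minor subtlety I would also flag is that the site definition states local nonemptiness with a covering family rather than a covering sieve. I would pass between the two via the sieve generated by the family, as in the gluing proposition of Section~\ref{sec:sites-sheaves}. With the three steps done, the two notions coincide on objects. Morphisms on both sides are $\G$-equivariant maps of sheaves, so the correspondence is in fact an equality of categories.
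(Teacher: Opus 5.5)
Your proposal is correct and takes the same route as the paper's sketch: it matches local inhabitation with $\Pcal\to 1$ being an epimorphism, and free transitivity with $\Pcal\times\G\to\Pcal\times\Pcal$ being an isomorphism. You supply the details the paper leaves implicit (objectwise computation of products and isomorphisms, the lemma that epimorphisms of sheaves are exactly the locally surjective maps, and reading condition (i) for every object $U$ via the sieve generated by a covering family), and all of these are handled correctly.
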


\begin{proof}[Proof sketch]
The site-theoretic definition of a $\G$-torsor is exactly the sheaf-theoretic version of local inhabitation together with free transitive action.
Inside the topos, these become the internal epimorphism condition and the isomorphism $P\times G\cong P\times P$.
The two definitions translate directly into one another.
\end{proof}

\begin{remark}
This proposition explains why the route taken in this paper is conceptually coherent.
The first torsor note~\cite{InoueTorsors} introduced torsors externally.
The present paper shows how the same idea is naturally internalized in a topos.
That is already enough to suggest why a later sheaf-theoretic account of cryptographic security~\cite{InoueSecurity}
should treat local knowledge and compatibility structurally rather than ad hoc.
\end{remark}

\section{Toward $\Sigma$-protocols and cryptographic security}\label{sec:sigma}

We now explain, at a conceptual level,
why the progression from torsors to topoi is relevant to $\Sigma$-protocols.
The point is not that a standard protocol transcript is literally a sheaf or a torsor in any naive sense.
The point is that the mathematics needed to understand
local consistency, simulability, and global knowledge
already has a natural language in sheaf and topos theory.

In the torsor note~\cite{InoueTorsors},
a recurring theme was the possibility of coherent transport without a distinguished origin.
In~\cite{InoueSecurity},
the analogous cryptographic theme is that local views, attacker observations,
and simulated pieces of evidence can carry structure
without there being a single naive global witness visible everywhere.
The relevant mathematics is therefore not merely the mathematics of global objects,
but the mathematics of locally available compatible objects.

This is where Grothendieck topologies matter.
A Grothendieck topology formalizes what counts as a covering family,
and hence what counts as a local view sufficient for gluing.
In the cryptographic setting of~\cite{InoueSecurity},
one studies information through attacker models and coverings generated by accessible observations.
Sheaves then encode consistency of information across such local observations.
The role of topos theory is to provide the ambient logic in which such local reasoning can be carried out coherently.

\begin{remark}
One may think heuristically of a simulator as providing local trivializations,
not of a geometric bundle, but of an information structure.
A successful simulation says that certain local observables can be produced compatibly.
The passage from these local pieces to a meaningful global statement
is therefore naturally reminiscent of descent.
This is not a theorem of protocol theory by itself,
but it is precisely the kind of structural analogy
that makes sheaf-theoretic and topos-theoretic language fruitful.
\end{remark}

The author's paper~\cite{InoueSecurity}
should be understood against this background.
That paper does not emerge suddenly from ordinary set-based protocol notation.
It emerges from a sustained local-to-global viewpoint:
first torsors, then sheaves and topoi, and only then cryptographic security.
Accordingly, the present paper and the earlier torsor note are meant as genuine preparation,
not as unrelated expository side remarks.
The intended line is
\[
\begin{aligned}
&\text{torsors and cocycles}
\longrightarrow
\text{sheaves and descent}
\\
&\qquad\longrightarrow
\text{topoi and internal logic}
\longrightarrow
\text{cryptographic local reasoning}.
\end{aligned}
\]

\section{Outlook}

Several directions remain open.
One may first strengthen the present introductory discussion
by giving a fuller account of geometric morphisms,
sites of definition,
and the relation between internal and external viewpoints.
Second, one may study torsors not merely in an ordinary sheaf topos,
but in more refined settings involving stacks or higher topoi.
Third, and most relevant for the author's current program,
one may try to make the cryptographic heuristics of Section~\ref{sec:sigma}
more precise by identifying exact sheaf-theoretic structures
attached to attacker models and classes of protocols.

For the moment, however, the aim of this paper is preparatory.
Together with the earlier torsor note~\cite{InoueTorsors},
it is intended to make the later paper~\cite{InoueSecurity}
conceptually approachable.
If the present discussion succeeds,
the reader should come away with the sense that topos theory is not an optional abstract superstructure,
but a natural enlargement of the local-to-global mathematics already present in torsors,
and therefore a serious candidate language for the structural study of cryptographic security.

\section{Conclusion}\label{sec:conclusion}

We have argued that the passage from torsors to topoi is not an accidental change of language,
but a structural enlargement of ideas already present in torsor theory.
Local triviality, compatibility on overlaps, cocycle data, and descent already point toward a world
in which local information and its gluing are primary.
Sheaf theory organizes this viewpoint systematically, and topos theory extends it further
into a setting with its own internal notions of truth, existence, and function.

For that reason, the present paper has pursued two goals at once.
On the one hand, it has served as an introduction to sites, sheaves, topoi, internal logic,
and related categorical tools.
On the other hand, it has been written as explicit preparation for the author's cryptographic paper~\cite{InoueSecurity},
in which Grothendieck topologies and sheaf-theoretic reasoning are used as genuine structural ingredients.
The earlier torsor note~\cite{InoueTorsors} supplied the first step of this line of thought;
the present paper develops the second step by making the topos-theoretic background more fully visible.

The appendices were included for a pedagogical reason.
A reader who is not yet fully comfortable with category theory, internal logic,
or limits and colimits should still be able to use this paper as a working preparation for later study.
In that sense, the paper is meant not only as a conceptual bridge, but also as a practical companion.
If it succeeds, the reader should be able to approach~\cite{InoueSecurity}
with a clearer understanding of why local reasoning, gluing, and sheaf-theoretic structure
belong naturally to the mathematical study of cryptographic security and $\Sigma$-protocols.

\bigskip

\noindent Takao Inou\'{e}

\noindent Faculty of Informatics

\noindent Yamato University

\noindent Katayama-cho 2-5-1, Suita, Osaka, 564-0082, Japan

\noindent inoue.takao@yamato-u.ac.jp
 
\noindent (Personal) takaoapple@gmail.com (I prefer my personal mail)
\bigskip

\appendix

\section{A brief introduction to category theory}\label{app:category}

This appendix is included for readers who are not yet comfortable with category theory
but would still like to follow the main text.
The aim is not to replace a full course,
but to provide enough language and technique
for basic work with sheaves, topoi, and internal logic.
In particular, we review categories, functors, natural transformations,
universal constructions, the Yoneda lemma,
and Cartesian closed categories.

\subsection{Categories, morphisms, and commutative diagrams}

A \emph{category} $\mathcal C$ consists of:
\begin{enumerate}[label=(\roman*)]
\item a class of objects $A,B,C,\dots$;
\item for each pair of objects $A,B$, a set $\Hom_{\mathcal C}(A,B)$ of morphisms $A\to B$;
\item for each triple $A,B,C$, a composition law
\[
\Hom_{\mathcal C}(B,C)\times \Hom_{\mathcal C}(A,B)\to \Hom_{\mathcal C}(A,C),
\qquad (g,f)\mapsto g\circ f;
\]
\item for each object $A$, an identity morphism $\id_A\colon A\to A$.
\end{enumerate}
These data satisfy associativity
\[
(h\circ g)\circ f = h\circ (g\circ f)
\]
and the identity laws
\[
\id_B\circ f = f,
\qquad
f\circ \id_A = f.
\]

Typical examples are the category $\Set$ of sets,
the category $\Grp$ of groups,
and the category $\Sh(X)$ of sheaves on a space $X$.
In all of these, one studies not only objects in isolation,
but also the maps between them and the way constructions behave functorially.

A diagram in a category is said to \emph{commute}
if every pair of morphism composites with the same source and target are equal.
For example,
\[
\begin{array}{c}
A \xrightarrow{f} B \\
\ \searrow_h \qquad \downarrow g \\
\qquad C
\end{array}
\]
commutes if and only if $g\circ f = h$.
Many categorical definitions are nothing but the statement that a certain diagram commutes
and is universal among all such diagrams.

\subsection{Functors and natural transformations}

A \emph{functor} $F\colon \mathcal C\to \mathcal D$
sends each object $A$ of $\mathcal C$ to an object $F(A)$ of $\mathcal D$
and each morphism $f\colon A\to B$ to a morphism
\[
F(f)\colon F(A)\to F(B)
\]
in such a way that
\[
F(g\circ f)=F(g)\circ F(f),
\qquad
F(\id_A)=\id_{F(A)}.
\]
Thus a functor preserves categorical structure.
For example, the power-set construction $X\mapsto \mathcal P(X)$ is a functor $\Set\to\Set$,
and the assignment $U\mapsto \F(U)$ for a sheaf $\F$ is a contravariant functor from open sets to sets.

If $F,G\colon \mathcal C\to \mathcal D$ are functors,
a \emph{natural transformation} $\eta\colon F\Rightarrow G$
is a family of morphisms
\[
\eta_A\colon F(A)\to G(A)
\]
indexed by objects $A$ of $\mathcal C$,
such that for every morphism $f\colon A\to B$,
the square
\[
\begin{array}{ccc}
F(A) & \xrightarrow{\eta_A} & G(A) \\
\downarrow F(f) & & \downarrow G(f) \\
F(B) & \xrightarrow{\eta_B} & G(B)
\end{array}
\]
commutes.
Naturality expresses the idea that the maps $\eta_A$ are compatible with all structure maps in sight.
This is one of the basic ways in which category theory formalizes ``the same construction in all contexts.''

\subsection{Universal properties}

A large part of category theory is the study of objects defined by universal properties.
The point is that an object is characterized not by the material from which it is built,
but by the maps into or out of it.

For instance, a product of objects $A$ and $B$ is an object $A\times B$
together with projections
\[
\pi_1\colon A\times B\to A,
\qquad
\pi_2\colon A\times B\to B,
\]
such that for every object $T$ and every pair of maps
\[
f\colon T\to A,
\qquad
g\colon T\to B,
\]
there exists a unique map
\[
\langle f,g\rangle\colon T\to A\times B
\]
with $\pi_1\circ \langle f,g\rangle = f$ and $\pi_2\circ \langle f,g\rangle = g$.
The relevant diagram is
\[
\begin{array}{ccccc}
& & T & & \\
& \swarrow f & \downarrow {\langle f,g\rangle} & \searrow g & \\
A & \xleftarrow{\pi_1} & A\times B & \xrightarrow{\pi_2} & B
\end{array}
\]
The dashed arrow is determined uniquely by the universal property.
This uniqueness is often more important than any explicit formula.

Another basic universal construction is the pullback.
Given $f\colon A\to C$ and $g\colon B\to C$,
a pullback is an object $A\times_C B$ fitting into a commutative square
\[
\begin{array}{ccc}
A\times_C B & \longrightarrow & B \\
\downarrow & & \downarrow g \\
A & \xrightarrow{f} & C
\end{array}
\]
that is universal among all such squares.
Pullbacks occur constantly in sheaf theory,
for example when one restricts local data to overlaps.

\subsection{Limits and colimits}

Two of the most basic families of universal constructions are \emph{limits} and \emph{colimits}.
A limit is a universal way of receiving compatible maps from an outside object into a diagram,
whereas a colimit is a universal way of sending compatible maps out of a diagram.
Products, pullbacks, and equalizers are examples of limits.
Coproducts, pushouts, and coequalizers are examples of colimits.

It is useful to begin with examples rather than the full abstract definition.
We have already seen that a product $A\times B$ is characterized by maps into $A$ and $B$.
This makes it a limit of the discrete diagram consisting of $A$ and $B$.
Similarly, the pullback
\[
A\times_C B
\]
is the limit of the cospan
\[
A \xrightarrow{f} C \xleftarrow{g} B.
\]
The pullback is the most basic way to enforce compatibility conditions.
In geometry and sheaf theory, it appears whenever one restricts two pieces of data to a common overlap.

Another important limit is the \emph{equalizer} of two parallel arrows
\[
A \rightrightarrows^{f}_{g} B.
\]
An equalizer is an object $E$ with a map $e\colon E\to A$ such that
\[
f\circ e = g\circ e,
\]
and which is universal with this property.
In \textbf{Sets}, one may think of it concretely as the subset
\[
E = \{a\in A : f(a)=g(a)\}.
\]
So the equalizer is the universal object on which the two maps become equal.
It is a limit because it receives maps from every object on which $f$ and $g$ already agree.

Dually, a coproduct $A\amalg B$ is characterized by inclusions
\[
A\to A\amalg B,
\qquad
B\to A\amalg B,
\]
such that maps out of $A\amalg B$ are exactly pairs of maps out of $A$ and $B$.
Likewise, a pushout is the colimit of a span
\[
A \xleftarrow{f} C \xrightarrow{g} B,
\]
and can be thought of as the universal way of gluing $A$ and $B$ along the common piece $C$.

Dually to the equalizer, one has the \emph{coequalizer} of two parallel arrows
\[
A \rightrightarrows^{f}_{g} B.
\]
A coequalizer is an object $q\colon B\to Q$ such that
\[
q\circ f = q\circ g,
\]
and which is universal with this property.
In \textbf{Sets}, one may think of it as the quotient of $B$ obtained by forcing
\[
f(a)\sim g(a)\qquad \text{for all } a\in A.
\]
So the coequalizer is the universal object in which the two maps become equal after passing to a quotient.
It is a colimit because every map out of $B$ that already identifies $f(a)$ and $g(a)$ factors uniquely through $Q$.

More generally, let $D\colon \mathcal J\to \mathcal C$ be a diagram.
A \emph{cone} from an object $L$ to $D$ is a family of maps
\[
L\to D(j)
\]
compatible with all arrows in $\mathcal J$.
A \emph{limit} of $D$ is a universal cone to $D$.
Dually, a \emph{cocone} from $D$ to an object $M$ is a family of maps
\[
D(j)\to M
\]
compatible with all arrows in $\mathcal J$,
and a \emph{colimit} of $D$ is a universal cocone.

In practice, one does not need the whole abstract definition at once.
What matters is to learn the logic of universal problems.
To construct a limit, one asks:
``What object receives compatible maps from everything else in the most universal way?''
To construct a colimit, one asks:
``What object is obtained by forcing compatible outgoing maps in the most universal way?''
This way of thinking is central in category theory and appears constantly in sheaf theory,
descent, and topos theory.

It is also helpful to pause over the traditional names \emph{projective limit}
(also called \emph{inverse limit}) and \emph{inductive limit}
(also called \emph{direct limit}).
An inverse system is typically a diagram
\[
X_0 \xleftarrow{} X_1 \xleftarrow{} X_2 \xleftarrow{} \cdots
\]
in which information is pushed backwards along transition maps.
Its limit is the object of all compatible families
\[
(x_0,x_1,x_2,\dots)
\]
with each $x_n$ matching the image of $x_{n+1}$.
So an inverse limit expresses the idea of keeping track of data at every stage
while enforcing perfect compatibility across all stages.
One may think of it as an object of coherent approximations.

Dually, a direct system is typically a diagram
\[
X_0 \xrightarrow{} X_1 \xrightarrow{} X_2 \xrightarrow{} \cdots
\]
in which information moves forward.
Its colimit identifies elements that become equal at some later stage.
Thus a direct limit expresses the idea of building a larger object by repeatedly adjoining data
and imposing the identifications generated by the transition maps.
One may think of it as the eventual object obtained from a process of accumulation.

In \textbf{Sets}, for example, the inverse limit of a sequence of quotient maps records sequences that remain compatible through every stage,
whereas the direct limit of a chain of inclusions is just the union of all stages.
These are excellent mental pictures to keep in mind:
inverse limits capture \emph{compatible families},
and direct limits capture \emph{eventual identification and accumulation}.

\subsection{Exercises on limits and colimits}

Because limits and colimits are often conceptually difficult at first,
it is helpful to practice them in concrete situations.

\begin{enumerate}[label=\textbf{Exercise C\arabic*.}, leftmargin=*]
\item Let $\mathcal C$ be a category with pullbacks.
Given arrows
\[
A \xrightarrow{f} C \xleftarrow{g} B,
\]
show that a morphism $X\to A\times_C B$ is equivalent to the data of morphisms
\[
u\colon X\to A,\qquad v\colon X\to B
\]
such that $f\circ u=g\circ v$.
Explain why this justifies the slogan that the pullback is the object of compatible pairs.

\item In the category of sets, let
\[
A=\{1,2\},\qquad B=\{a,b\},\qquad C=\{\ast\}
\]
with the unique maps $A\to C$ and $B\to C$.
Compute the pullback $A\times_C B$ explicitly.
Then compute the pushout of the span
\[
A \xleftarrow{\mathrm{id}_A} A \xrightarrow{h} B
\]
for a map $h\colon A\to B$ with $h(1)=a$ and $h(2)=b$.

\item Let $D\colon \mathcal J\to \mathcal C$ be a diagram and let $K\colon \mathcal J\to \mathbf 1$ be the unique functor to the terminal category.
Explain why a left Kan extension $\operatorname{Lan}_K D$ is the same thing as a colimit of $D$,
and why a right Kan extension $\operatorname{Ran}_K D$ is the same thing as a limit of $D$.
You do not need to prove the most general theorem formally;
it is enough to unpack the corresponding universal properties carefully.

\item Consider the inverse system in \textbf{Sets}
\[
\cdots \xrightarrow{\pi_4} \mathbb Z/16\mathbb Z \xrightarrow{\pi_3} \mathbb Z/8\mathbb Z \xrightarrow{\pi_2} \mathbb Z/4\mathbb Z \xrightarrow{\pi_1} \mathbb Z/2\mathbb Z,
\]
where each map is reduction modulo a smaller power of $2$.
Describe concretely what an element of the inverse limit of this system is.
Explain why this inverse limit should be thought of as a coherent system of approximations rather than as a single residue class modulo one fixed power of $2$.

\item Consider the direct system in \textbf{Sets}
\[
\{1,\dots,n\} \hookrightarrow \{1,\dots,n+1\} \hookrightarrow \{1,\dots,n+2\} \hookrightarrow \cdots,
\]
where each map is the evident inclusion.
Compute its colimit.
Then explain in words why this example captures the intuitive meaning of a direct limit as an ``eventual union'' of stages.
\end{enumerate}

\subsection{Kan extensions}

Kan extensions are among the most important general constructions in category theory.
They are, in a precise sense, the universal way of extending a functor along another functor.
Many familiar constructions are special cases of Kan extensions,
including several limits, colimits, sheafification-type procedures, and adjoints.

Suppose we are given categories $\mathcal A,\mathcal B,\mathcal C$,
a functor
\[
K\colon \mathcal A\to \mathcal B,
\]
and another functor
\[
F\colon \mathcal A\to \mathcal C.
\]
We would like to replace $F$ by a functor defined on $\mathcal B$.
In other words, we seek a functor
\[
G\colon \mathcal B\to \mathcal C
\]
which, in some universal sense, best approximates the idea that
\[
G\circ K \approx F.
\]
There are two versions of this problem.

A \emph{left Kan extension} of $F$ along $K$,
written
\[
\operatorname{Lan}_K F,
\]
is a functor $\mathcal B\to \mathcal C$
equipped with a natural transformation
\[
\eta\colon F \Rightarrow (\operatorname{Lan}_K F)\circ K
\]
which is universal among all such pairs.
This means that if $H\colon \mathcal B\to \mathcal C$ is any other functor
and $\alpha\colon F\Rightarrow H\circ K$ is any natural transformation,
then there exists a unique natural transformation
\[
\overline{\alpha}\colon \operatorname{Lan}_K F\Rightarrow H
\]
such that
\[
\alpha = (\overline{\alpha}K)\circ \eta.
\]

Dually, a \emph{right Kan extension} of $F$ along $K$,
written
\[
\operatorname{Ran}_K F,
\]
is a functor $\mathcal B\to \mathcal C$
equipped with a natural transformation
\[
\epsilon\colon (\operatorname{Ran}_K F)\circ K \Rightarrow F
\]
which is universal among all such pairs.
Thus right Kan extensions solve the best universal extension problem on the limiting side,
while left Kan extensions solve it on the colimiting side.

One reason Kan extensions matter is that they unify many definitions.
For example, if $K\colon \mathcal A\to \mathbf{1}$ is the unique functor to the terminal category,
then
\[
\operatorname{Lan}_K F
\]
is essentially the colimit of $F$,
while
\[
\operatorname{Ran}_K F
\]
is essentially the limit of $F$,
provided these exist.
So limits and colimits can themselves be regarded as Kan extensions to a point.

A second reason is that adjoints may be characterized by Kan extensions.
If $K\colon \mathcal A\to \mathcal B$ has a left adjoint,
then that left adjoint may be described as a suitable Kan extension of the identity functor.
Thus Kan extensions sit very near the center of category theory:
they encode the general pattern of extending structure universally.

For beginners, the slogan is:
\begin{quote}
A Kan extension is the universal best way to continue a functor beyond the category where it was originally defined.
\end{quote}
This slogan is not a definition, but it captures the right intuition.
Left Kan extensions are built from colimit-type information,
and right Kan extensions from limit-type information.
For that reason, they form a bridge between functoriality and the universal constructions discussed above.

\subsection{Representable functors and the Yoneda point of view}

Fix a category $\mathcal C$.
For any object $A$, one obtains a contravariant functor
\[
h_A := \Hom_{\mathcal C}(-,A)\colon \mathcal C^{\mathrm{op}}\to \Set,
\]
called the \emph{functor represented by $A$}.
It sends an object $X$ to the set of morphisms $X\to A$.
Morphisms into $A$ therefore define a functorial ``profile'' of $A$ as seen from the rest of the category.

The basic insight of Yoneda theory is that an object is completely determined by this profile.
That is, to understand $A$, it is often enough to understand all maps into $A$.
This principle is so important that it is worth stating in full.

\begin{theorem}[Yoneda lemma]
Let $\mathcal C$ be a locally small category,
let $A$ be an object of $\mathcal C$,
and let $F\colon \mathcal C^{\mathrm{op}}\to \Set$ be a presheaf.
Then there is a natural bijection
\[
\operatorname{Nat}(h_A,F) \cong F(A).
\]
Here $\operatorname{Nat}(h_A,F)$ denotes the set of natural transformations from $h_A$ to $F$.
\end{theorem}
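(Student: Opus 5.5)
I will prove the Yoneda lemma by writing down two explicit maps and checking that they are mutually inverse. The first map,
\[
\Phi\colon \operatorname{Nat}(h_A,F)\to F(A),\qquad \Phi(\eta)=\eta_A(\id_A),
\]
evaluates a natural transformation at the identity of $A$; this makes sense because $\id_A\in h_A(A)=\Hom_{\mathcal C}(A,A)$. In the other direction, for $x\in F(A)$ I will define $\Psi(x)\colon h_A\Rightarrow F$ by
\[
\Psi(x)_X(f)=F(f)(x)\qquad\text{for } f\colon X\to A.
\]
Here $F(f)\colon F(A)\to F(X)$ because $F$ is contravariant. Local smallness guarantees that each $h_A(X)$ is a set, so $\operatorname{Nat}(h_A,F)$ is a genuine set and the bijection is a statement in $\Set$.

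The order of the checks will be as follows.
\begin{enumerate}[label=(\arabic*)]
\item First I verify that $\Psi(x)$ is natural. For $g\colon Y\to X$, the functor $h_A(g)$ sends $f$ to $f\circ g$. Naturality therefore amounts to the identity $F(f\circ g)(x)=F(g)(F(f)(x))$, which is exactly contravariant functoriality of $F$.
\item Next, $\Phi(\Psi(x))=F(\id_A)(x)=x$, by the identity law for $F$.
\item The converse composite $\Psi(\Phi(\eta))=\eta$ is the key step; I expect it to be the main conceptual obstacle, although it is short. For any $f\colon X\to A$, I apply naturality of $\eta$ to the morphism $f$ itself, using the square from the appendix's definition of natural transformation, and chase the element $\id_A$ through it. One path gives $\eta_X(h_A(f)(\id_A))=\eta_X(\id_A\circ f)=\eta_X(f)$. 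The other path gives $F(f)(\eta_A(\id_A))$. Hence $\eta_X(f)=F(f)(\Phi(\eta))=\Psi(\Phi(\eta))_X(f)$. The insight is that every element $f$ of $h_A(X)$ is the image of the universal element $\id_A$ under $h_A(f)$, so a natural transformation is forced by its value at $\id_A$.
\end{enumerate}

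Finally, I address the naturality of the bijection, meaning naturality in both $A$ and $F$.
\begin{itemize}
\item For a natural transformation $\theta\colon F\Rightarrow F'$, the bijection $\Phi$ commutes with postcomposition, since $(\theta\circ\eta)_A(\id_A)=\theta_A(\eta_A(\id_A))$.
\item For $u\colon B\to A$, precomposition with $h_u\colon h_B\Rightarrow h_A$ corresponds to $F(u)$. This holds because $(\eta\circ h_u)_B(\id_B)=\eta_B(u)=F(u)(\eta_A(\id_A))$, which again follows from the naturality computation in step (3).
\end{itemize}
These are routine unwindings, and I would state them briefly.
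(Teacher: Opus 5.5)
Your proof is correct and is essentially the paper's argument. It uses the same maps $\Phi(\eta)=\eta_A(\id_A)$ and $\Psi(x)_X(f)=F(f)(x)$, gets naturality of $\Psi(x)$ from contravariant functoriality of $F$, $\Phi\circ\Psi=\id_{F(A)}$ from $F(\id_A)=\id_{F(A)}$, and $\Psi\circ\Phi=\id$ by chasing $\id_A$ through the naturality square of $\eta$ at $f$. Your checks of naturality in $F$ and in $A$ are also correct and go slightly beyond the paper, which asserts a \emph{natural} bijection but only verifies bijectivity. One small quibble: when $\mathcal C$ is large, $\operatorname{Nat}(h_A,F)$ being a set follows from the injectivity of $\Phi$ in your step (3), not from local smallness alone.
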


\begin{proof}
Given a natural transformation $\eta\colon h_A\Rightarrow F$,
consider the element
\[
\eta_A(\id_A)\in F(A).
\]
This defines a function
\[
\Phi\colon \operatorname{Nat}(h_A,F)\to F(A),
\qquad
\Phi(\eta)=\eta_A(\id_A).
\]
Conversely, given $x\in F(A)$,
define for each object $X$ a map
\[
\eta^x_X\colon h_A(X)=\Hom_{\mathcal C}(X,A)\to F(X)
\]
by
\[
\eta^x_X(f)=F(f)(x).
\]
We must check that $\eta^x$ is natural.
If $u\colon Y\to X$,
then for any $f\colon X\to A$ we have
\[
\eta^x_Y(f\circ u)=F(f\circ u)(x)=F(u)(F(f)(x))=F(u)(\eta^x_X(f)),
\]
so the naturality square commutes.
Thus $x$ determines a natural transformation $\eta^x$.
This gives a function
\[
\Psi\colon F(A)\to \operatorname{Nat}(h_A,F),
\qquad
\Psi(x)=\eta^x.
\]
Now
\[
\Phi(\Psi(x))=\eta^x_A(\id_A)=F(\id_A)(x)=x,
\]
so $\Phi\circ \Psi=\id_{F(A)}$.
Conversely, if $\eta\colon h_A\Rightarrow F$,
then for each $f\colon X\to A$,
naturality applied to $f$ gives
\[
\eta_X(f)=F(f)(\eta_A(\id_A))=\bigl(\eta^{\eta_A(\id_A)}\bigr)_X(f).
\]
Hence $\Psi(\Phi(\eta))=\eta$.
So $\Phi$ and $\Psi$ are inverse bijections.
\end{proof}

A particularly important consequence is the \emph{Yoneda embedding}
\[
\mathcal C \hookrightarrow [\mathcal C^{\mathrm{op}},\Set],
\qquad
A\mapsto h_A,
\]
which is fully faithful.
Thus one may regard every category as living inside a category of set-valued functors.
This perspective lies in the background of sheaf theory and topos theory,
since a sheaf is precisely a presheaf satisfying an additional gluing condition.

\subsection{Cartesian closed categories}

A category $\mathcal C$ is called \emph{Cartesian closed}
if it has finite products and, for every pair of objects $A,B$,
an exponential object $B^A$.
The exponential is characterized by a natural bijection
\[
\Hom_{\mathcal C}(X\times A,B)
\cong
\Hom_{\mathcal C}(X,B^A)
\]
for all objects $X$.
Equivalently, maps out of $X\times A$ into $B$
are represented by maps out of $X$ into $B^A$.
This is the categorical form of currying.

In the category $\Set$, the exponential $B^A$ is the set of functions from $A$ to $B$.
The adjunction above says exactly that a function
\[
X\times A\to B
\]
is the same thing as a function
\[
X\to B^A.
\]
Thus Cartesian closed categories are categories in which internal function objects exist.

The reason CCCs matter here is that elementary topoi are, by definition,
finite-limit categories that are Cartesian closed and possess a subobject classifier.
So if one wishes to understand topoi,
one must become comfortable with the logic of products and exponentials.
The exponential object is what allows implication and function types to be represented internally.
This is one of the bridges from categorical structure to internal logic.

\subsection{Why these notions matter for the main text}

The main body of this paper uses all of the ideas above.
Sites and sheaves are formulated in categories.
Torsors on a site are functorial objects defined by local universal properties.
Topoi are categories with enough internal structure to support logic.
The Yoneda lemma lies silently behind the passage from objects to presheaves,
and Cartesian closedness lies behind internal function objects and implication.
Thus category theory is not merely background notation here.
It is the language in which the local-to-global structures of torsors, sheaves, and topoi become visible and manageable.

\section{Internal logic and intuitionistic logic}\label{app:intuitionistic}

The internal logic of an elementary topos is, in general, not classical but intuitionistic.
This statement deserves a careful explanation,
since it is one of the most important conceptual lessons of topos theory.

In classical logic, every proposition is either true or false,
and the law of excluded middle
\[
P\vee \neg P
\]
is always valid.
Equivalently, truth values are organized by a Boolean algebra.
In the category $\Set$, this is reflected in the fact that the subobject classifier is the two-element set $\{0,1\}$,
and subobjects of a set correspond to characteristic functions with Boolean values.

In a general topos, the object of truth values is $\Omega$.
The collection of its global or local truth values is not, in general, Boolean.
Rather, it carries the structure of a Heyting algebra.
A Heyting algebra still supports conjunction, disjunction, implication, and a notion of negation,
but negation is weaker than in Boolean logic.
The formula $P\vee \neg P$ need not hold.
For this reason, the internal logic is usually intuitionistic.

This is not a defect or an omission.
It is a faithful reflection of the fact that truth in a topos may be local.
In a sheaf topos $\Sh(X)$, a proposition over an open set $U$ may hold on one part of $U$ and fail on another.
Its truth value is therefore represented by an open subset of $U$.
Such local truth values naturally form a Heyting algebra of opens,
not in general a Boolean algebra.
Thus intuitionistic logic appears because the geometry of locality is itself non-Boolean.

One useful way to remember the difference is this:
classical logic asks whether a proposition has already been globally decided,
whereas intuitionistic logic asks whether one can construct or verify it in the available context.
In a topos, the available context is often local.
Thus intuitionistic logic is the natural logic of constructive local reasoning.

The relation with Kripke--Joyal semantics makes this especially transparent.
To say that $U\Vdash \exists x\,\varphi(x)$ means that one can pass to a cover of $U$ and find local witnesses there.
To say that $U\Vdash \forall x\,\varphi(x)$ means that the statement must continue to hold after every restriction to smaller opens.
These are precisely the kinds of rules one expects in intuitionistic semantics,
where truth is stable under refinement but may require local construction.

It is therefore reasonable to summarize the situation as follows.
Topos theory provides an ambient categorical setting in which geometric locality,
sheaf-theoretic gluing, and intuitionistic reasoning belong together.
The internal logic of a topos is intuitionistic because truth is mediated by context,
restriction, and compatibility rather than by a single global Boolean decision procedure.

This observation matters for the broader aims of the present paper.
The later cryptographic applications of~\cite{InoueSecurity}
are concerned with local observability, local consistency, controlled gluing of information,
and the disciplined passage from partial data to global structure.
For such purposes, intuitionistic logic is not merely philosophically interesting;
it is structurally appropriate.
It supplies a logical language in which the difference between global knowledge and locally forced information can be expressed without collapse.

\section{Solutions to the exercises}\label{app:solutions}

\paragraph{Solution to Exercise 1.}
For a sheaf topos $\Sh(X)$, the subobject classifier $\Omega$ assigns to each open set $U$ the set of open subsets of $U$.
Indeed, a subobject of the terminal sheaf restricted to $U$ is determined by the open region where it is inhabited,
so $\Omega(U)$ is naturally identified with the lattice of opens of $U$.
Hence a truth value over $U$ is not just a bit but a region of $U$ on which a statement holds.
This is why truth is local.

\paragraph{Solution to Exercise 2.}
Choose local sections $s_i\in \Pcal(U_i)$.
On each overlap $U_{ij}=U_i\times_U U_j$,
free transitivity gives a unique element $g_{ij}\in \G(U_{ij})$ such that
\[
s_j=s_i\cdot g_{ij}.
\]
On a triple overlap $U_{ijk}$, one may compare $s_k$ with $s_i$ directly and via $s_j$:
\[
s_k=s_i\cdot g_{ik}=s_j\cdot g_{jk}=s_i\cdot g_{ij}g_{jk}.
\]
By uniqueness of transport, $g_{ik}=g_{ij}g_{jk}$.
This is the cocycle condition.

\paragraph{Solution to Exercise 3.}
The statement $U\Vdash \exists x\,\varphi(x)$ means that after refining $U$ by a cover,
one can find local witnesses on the pieces.
Thus existence is allowed to be local.
By contrast, $U\Vdash \forall x\,\varphi(x)$ means that every section over every smaller open $V\subseteq U$ satisfies $\varphi$.
Universal truth must therefore survive every further localization.
This difference reflects the intuitionistic meaning of quantifiers.

\paragraph{Solution to Exercise 4.}
For $f\in C^0(U,\mathbb R)$,
the classifying map sends $f$ to the open subset
\[
\chi_P(f)=\{x\in U\mid f(x)>0\}.
\]
So the internal predicate ``$f>0$'' is interpreted as the open region where positivity holds.
This is a concrete model of a truth value in the sheaf topos.

\paragraph{Solution to Exercise 5.}
Finite limits provide products, terminal objects, pullbacks, and equalizers,
so one can form many familiar set-like constructions.
Exponentials provide internal function objects.
The subobject classifier provides an internal object of truth values and allows predicates to be represented categorically.
Among these ingredients, the subobject classifier is the one most directly responsible for internal truth values,
although the others are needed to support the broader logical and categorical structure.

\paragraph{Solution to Exercise C1.}
By the universal property of the pullback, to give a map
\[
\phi\colon X\to A\times_C B
\]
is equivalent to giving maps
\[
u=\pi_A\circ \phi\colon X\to A,\qquad v=\pi_B\circ \phi\colon X\to B
\]
such that
\[
f\circ u=g\circ v.
\]
Conversely, any such compatible pair $(u,v)$ induces a unique map
\[
X\to A\times_C B.
\]
Thus the pullback is precisely the universal object parameterizing compatible pairs of maps into $A$ and $B$ over $C$.

\paragraph{Solution to Exercise C2.}
Since both maps $A\to C$ and $B\to C$ are unique and $C$ has only one element, the compatibility condition is automatic.
Hence
\[
A\times_C B \cong A\times B
\]
as sets, so explicitly
\[
A\times_C B=\{(1,a),(1,b),(2,a),(2,b)\}.
\]

For the pushout of
\[
A \xleftarrow{\mathrm{id}_A} A \xrightarrow{h} B,
\]
one starts from the disjoint union $A\amalg B$ and identifies each element $x\in A$ on the left with its image $h(x)\in B$ on the right.
Since $h(1)=a$ and $h(2)=b$, the relations identify $1$ with $a$ and $2$ with $b$.
After imposing these identifications, no new points remain beyond the two classes represented by $a$ and $b$.
So the pushout is canonically isomorphic to $B$.

\paragraph{Solution to Exercise C3.}
A functor from the terminal category $\mathbf 1$ to $\mathcal C$ is just the choice of a single object of $\mathcal C$.
Thus a functor
\[
\operatorname{Lan}_K D\colon \mathbf 1\to \mathcal C
\]
is exactly an object $L$ of $\mathcal C$.
The natural transformation
\[
D \Rightarrow (\operatorname{Lan}_K D)\circ K
\]
is then precisely a cocone from the diagram $D$ to the object $L$.
The universal property of the left Kan extension says that this cocone is universal among all cocones out of $D$.
But that is exactly the definition of a colimit.

Dually, a right Kan extension
\[
\operatorname{Ran}_K D\colon \mathbf 1\to \mathcal C
\]
amounts to an object $R$ together with a natural transformation
\[
(\operatorname{Ran}_K D)\circ K \Rightarrow D,
\]
which is exactly a cone from $R$ to the diagram $D$.
Its universal property says that this cone is universal among all cones into $D$.
That is exactly the definition of a limit.

\paragraph{Solution to Exercise C4.}
An element of the inverse limit is a sequence
\[
(x_1,x_2,x_3,\dots),\qquad x_n\in \mathbb Z/2^n\mathbb Z,
\]
such that each $x_{n+1}$ reduces to $x_n$ modulo $2^n$.
So one is not choosing a single residue class once and for all;
one is choosing one residue class modulo $2$, one modulo $4$, one modulo $8$, and so on,
in such a way that all of them fit together compatibly.
This is why the inverse limit is best viewed as a coherent family of approximations.
Each stage gives finite information, and the limit remembers all stages simultaneously.
This is exactly the pattern that later leads to objects such as the $2$-adic integers.

\paragraph{Solution to Exercise C5.}
Since every map in the system is an inclusion,
no new identifications are imposed beyond those already present in the sets themselves.
Therefore the colimit is simply the union of all stages:
\[
\varinjlim_n \{1,\dots,n\} \cong \mathbb N_{>0}.
\]
This illustrates the intuitive meaning of a direct limit very well.
One starts with a sequence of objects that grow by adjoining new elements,
and the colimit is the eventual object containing everything that appears at some stage.
In this example, every positive integer appears after finitely many steps,
so the direct limit is the set of all positive integers.

So limits and colimits are Kan extensions to the terminal category:
\[
\operatorname{colim} D \simeq \operatorname{Lan}_K D,
\qquad
\operatorname{lim} D \simeq \operatorname{Ran}_K D.
\]

\section{A possible lecture plan}\label{app:lecture-plan}

This note can be taught in several ways, depending on the background of the audience.
If one aims at a serious but still introductory course for students who are not yet fully comfortable
with category theory, a natural format is a course of about \textbf{10--12 lectures}, each of length
\textbf{90 minutes}. A shorter course of 8 lectures is possible if some of the appendices are treated
as background reading, while a more leisurely 14-lecture version would allow more examples and exercises.
For most purposes, however, the 10--12 lecture format seems the most realistic.

The guiding principle of the course is to move from concrete local-to-global intuition to the more abstract
language of sheaves, topoi, and internal logic, and only then to explain why this matters for the
sheaf-theoretic approach to cryptographic security and $\Sigma$-protocols. In this sense, the course is
not merely a survey of topics, but a structured preparation for the paper
\emph{Grothendieck Topologies and Sheaf-Theoretic Foundations of Cryptographic Security:
Attacker Models and $\Sigma$-Protocols as the First Step}.

\subsection{Recommended overall length}

A reasonable recommendation is the following.
\begin{itemize}
\item \textbf{Minimum version:} 8 lectures of 90 minutes, if the appendices are assigned for self-study.
\item \textbf{Standard version:} 10 lectures of 90 minutes, for a balanced introductory course.
\item \textbf{Expanded version:} 12 lectures of 90 minutes, if one wants to spend time on exercises,
examples, and the logical aspects of topoi.
\end{itemize}

Among these, the \textbf{10-lecture plan} is probably the best default choice.
It is long enough to make the theory intelligible, but short enough to be used as a focused reading course
or graduate seminar.

\subsection{A standard 10-lecture plan}

We now describe a concrete 10-lecture plan.

\paragraph{Lecture 1. From torsors to local-to-global thinking.}
Review the basic idea of a torsor: free transitive action, lack of a preferred origin,
transport between points, and the role of local triviality.
Explain again why torsors naturally suggest cocycle data and gluing.
The goal of the lecture is to make clear that the passage to sheaves and topoi is not a change of subject,
but a continuation of the same local-to-global philosophy.

\paragraph{Lecture 2. Sites, Grothendieck topologies, and sheaves.}
Introduce categories as generalized indexing worlds, then define sieves and Grothendieck topologies.
Give concrete examples: open covers on a topological space and simple small-site examples.
Then explain presheaves and sheaves, emphasizing gluing and locality.
The pedagogical goal is to make the sheaf condition feel natural rather than formal.

\paragraph{Lecture 3. Torsors on a site, cocycles, and descent.}
Return to torsors, now in the language of sheaves of groups.
Explain local triviality on a site, Cech cocycles, and descent data.
This lecture is especially important because it ties the earlier torsor note to the present text
and makes precise the sense in which torsors already live naturally in a sheaf-theoretic world.

\paragraph{Lecture 4. Sheaf categories and Grothendieck topoi.}
Explain why categories of sheaves are not merely convenient containers, but genuine mathematical universes.
Introduce the slogan that a Grothendieck topos is a generalized space viewed through its sheaves.
Compare $\mathbf{Set}$ with $\mathbf{Sh}(X)$ and explain why the latter behaves like a variable version of the former.

\paragraph{Lecture 5. Elementary topoi.}
Introduce finite limits, exponentials, and subobject classifiers.
Present the definition of an elementary topos and explain why this definition abstracts the crucial structural
features of categories of sheaves.
It is helpful here to compare the notions with familiar constructions in $\mathbf{Set}$.

\paragraph{Lecture 6. Subobject classifiers and internal logic.}
Develop the idea that truth values in a topos may vary locally.
Explain the meaning of $\Omega$, characteristic morphisms, and the interpretation of predicates.
Introduce the basic intuition of internal logic and its local character.
If time permits, illustrate the discussion with a sheaf example where local truth and global truth differ.

\paragraph{Lecture 7. Intuitionistic logic and local reasoning.}
Build on the previous lecture and explain why the internal logic of a topos is generally intuitionistic.
Discuss conjunction, implication, and quantifiers in an informal but mathematically responsible way.
This is the place where students begin to see that logical structure is not external decoration,
but part of the geometry of a topos itself.

\paragraph{Lecture 8. Category-theoretic tools in practice.}
Use selected material from the appendix: Yoneda lemma, limits and colimits,
equalizers and coequalizers, pullbacks and pushouts, and the meaning of cartesian closedness.
Work through at least one exercise in class.
The goal is not to turn the course into a general category theory course,
but to provide enough operational fluency for the earlier lectures to become usable.

\paragraph{Lecture 9. Torsors and local symmetry inside a topos.}
Explain what it means to speak about group objects, torsors, and local symmetry internally to a topos.
This lecture should make clear that torsors are not left behind when one enters topos theory;
rather, they become more conceptually transparent.
Use this to revisit descent and local triviality from the internal point of view.

\paragraph{Lecture 10. Toward cryptographic security and $\Sigma$-protocols.}
Finally explain the conceptual bridge to the cryptographic paper.
Discuss why local consistency, simulability, witness data, and gluing are naturally illuminated by
sheaf-theoretic and topos-theoretic reasoning.
The aim is not yet to reproduce the full technical arguments of the cryptographic paper,
but to leave students in a position where that paper becomes meaningfully readable.

\subsection{A possible 12-lecture expansion}

If two additional lectures are available, a useful expansion is the following.
\begin{itemize}
\item Add one lecture entirely devoted to exercises on limits, colimits, Yoneda, and internal logic.
\item Add one lecture on reading selected passages of the cryptographic paper together with the present note,
so that the preparatory nature of this introduction becomes fully explicit.
\end{itemize}

This 12-lecture version is especially appropriate for a graduate seminar or supervised reading course.

\subsection{Pedagogical remarks}

The most difficult conceptual jump for beginners usually occurs at two points.
First, they may understand sheaves as a technical gadget without seeing why they lead naturally to topoi.
Second, they may understand the formal definition of a topos without seeing why internal logic matters.
For that reason, one should repeatedly return to a small number of central themes:
\begin{itemize}
\item local data and gluing,
\item absence of preferred global choices,
\item descent and compatibility,
\item local truth versus global truth,
\item the role of these ideas in the structural understanding of cryptographic security.
\end{itemize}

In short, this material is probably too rich for a very short mini-course,
but it is well suited to a \textbf{10--12 lecture course of 90 minutes each}.
Such a course would give students not only a first acquaintance with sheaves and topoi,
but also a serious conceptual preparation for the later study of $\Sigma$-protocols.


\begin{thebibliography}{99}

\bibitem{InoueTorsors}
T.~Inou\'e,
\emph{An Introduction to Torsors in Mathematics with a View Toward $\Sigma$-Protocols in Cryptography},
preprint, arXiv, 2026.

\bibitem{InoueSecurity}
T.~Inou\'e,
\emph{Grothendieck Topologies and Sheaf-Theoretic Foundations of Cryptographic Security:
Attacker Models and $\Sigma$-Protocols as the First Step},
preprint, arXiv, 2026.

\bibitem{MacLaneMoerdijk}
S.~Mac~Lane and I.~Moerdijk,
\emph{Sheaves in Geometry and Logic: A First Introduction to Topos Theory},
Springer, 1992.

\bibitem{JohnstoneSketches}
P.~T.~Johnstone,
\emph{Sketches of an Elephant: A Topos Theory Compendium, Vol.~1},
Oxford University Press, 2002.

\bibitem{JohnstoneElephant}
P.~T.~Johnstone,
\emph{Sketches of an Elephant: A Topos Theory Compendium, Vol.~2},
Oxford University Press, 2002.

\bibitem{SGA4}
M.~Artin, A.~Grothendieck, and J.~L.~Verdier,
\emph{Th\'eorie des topos et cohomologie \`etale des sch\'emas (SGA 4)},
Lecture Notes in Mathematics 269, 270, 305,
Springer, 1972--1973.

\bibitem{GoldwasserMR}
S.~Goldwasser, S.~Micali, and C.~Rackoff,
\emph{The Knowledge Complexity of Interactive Proof Systems},
SIAM Journal on Computing \textbf{18} (1989), 186--208.

\bibitem{DamgardSigma}
I.~Damg{\aa}rd,
\emph{On $\Sigma$-Protocols},
Lecture notes, Aarhus University, 2002.

\bibitem{Schnorr}
C.~P.~Schnorr,
\emph{Efficient Identification and Signatures for Smart Cards},
In: \emph{Advances in Cryptology -- CRYPTO '89},
Lecture Notes in Computer Science 435,
Springer, 1990, pp.~239--252.

\end{thebibliography}
\end{document}